\documentclass[graybox]{svmult}

\usepackage{type1cm}        
\usepackage{makeidx}         
\usepackage{graphicx}        
\usepackage{multicol}        
\usepackage[bottom]{footmisc}

\usepackage{newtxtext}       %
\usepackage[varvw]{newtxmath}       

\usepackage{tikz-cd}
\usetikzlibrary{decorations.pathmorphing}

\usepackage{latexsym}
\usepackage{color}
\usepackage{enumitem}
\usepackage{mathtools}
\usepackage{amsrefs}

\newcommand{\norm}[1]{\lVert#1\rVert}

\newcommand{\maps}[1]{\mathop{\overset{#1}\longrightarrow}}
\newcommand {\Del}{ \; \Big| \;}
\newcommand {\del}{ \; \big| \;}

\newcommand{\iso}{\cong}
\newcommand{\ov}{\overline}
\newcommand{\inv}{^{-1}}

\newcommand {\N}{\mathbb N}
\newcommand {\Z}{\mathbb Z}

\newcommand {\R}{\mathbb R}
\newcommand {\OO}{\mathcal O}
\newcommand{\eps}{\epsilon}
\newcommand{\equi}{\Leftrightarrow}

\newcommand {\go} {\mathfrak}
\newcommand{\saa}{\Rightarrow}
\newcommand {\ku} {\mathcal}
\newcommand {\e} {\exists}
\renewcommand {\a} {\forall}

\theoremstyle{plain}
\newtheorem{exa}{Example}
\newtheorem{observation}{Observation}

\definecolor{roed}{rgb}{1,0,0}

\newcommand{\forkindep}[1][]{\mathop{\mathop{\vcenter{\hbox{\oalign{\noalign{\kern-.3ex}
\hfil$\vert$\hfil\cr\noalign{\kern-.7ex}$\smile$\cr\noalign{\kern-.3ex}}}}}\displaylimits_{#1}}}

\newcommand{\maths}[1]{\[\begin{split}{#1}\end{split}\]}

\newcommand{\mathseq}[2]{\begin{equation}\label{#1}\begin{split}{#2}\end{split}\end{equation}}

\makeindex             

\begin{document}

\title*{The geometrisation problem for topological groups}
\author{Christian Rosendal}
\institute{Christian Rosendal \at Department of Mathematics, University of Maryland, 4176 Campus Drive - William E. Kirwan Hall, College Park, MD 20742-4015, USA\\
{rosendal@umd.edu}\\
{https://sites.google.com/view/christian-rosendal/}
}
%
%
\maketitle

\abstract*{Each chapter should be preceded by an abstract (no more than 200 words) that summarizes the content. The abstract will appear \textit{online} at \url{www.SpringerLink.com} and be available with unrestricted access. This allows unregistered users to read the abstract as a teaser for the complete chapter.
Please use the 'starred' version of the \texttt{abstract} command for typesetting the text of the online abstracts (cf. source file of this chapter template \texttt{abstract}) and include them with the source files of your manuscript. Use the plain \texttt{abstract} command if the abstract is also to appear in the printed version of the book.}

\section{Introduction}
In this paper we  present a general approach to what may be termed the {\em geometrisation problem} for topological groups, namely,  the problem of understanding to which extent and how topological groups are also geometric objects. Of course, as is familiar from the case of Lie groups, topological groups often come with some pre-determined geometry resulting, for example, from a differentiable structure on the group. However, our goal differs from Lie theory in the sense that we are not aiming to describe the geometry obtained by imposing some additional structure on a topological group, but rather to investigate how geometric structure arises from the topological-algebraic structure of the group itself. Thus, the geometries we find should be inherent to the topological group structure. This viewpoint is of course familiar to geometric group theorists. Indeed, the basic premise of geometric group theory is the observation that the word metric on a finitely generated group is independent (up to bi-Lipschitz equivalence) of the chosen finite generating set. In that sense, a finitely generated group can be said to have a well-defined Lipschitz geometry.

The specific viewpoint taken here is to split this geometrisation problem into two questions, one dealing with the small scale and the other with the large scale. In turn, this is done by using Lipschitz conditions on maps to introduce a couple of geometric categories, 
$$
{\sf LocalLipschitz}, 
$$
reflecting possible small scale geometry, and 
$$
{\sf Quasimetric},
$$
which codifies possible large scale geometry. These categories are themselves more structured versions of the categories of uniform and coarse spaces to which all topological groups naturally belong.  However, to describe all of this, we need first to introduce the fundamental concepts.

\begin{acknowledgement}
This article is based on two mini-courses given by the author at the workshop {\em Big Mapping Class Groups and Diffeomorphism Groups} at CIRM, Marseille, France in October 2022 and at the Korea Institute for Advanced Studies (KIAS), Seoul, South Korea in June 2023. The author is grateful to the participants and organisers of each of these events and especially to Prof. Sanghyun Kim  for the opportunity to visit KIAS and enjoy the fruitful work environment there. Many thanks are also due to the anonymous referee for a very detailed report that helped to improve the exposition.

Much of the material presented here has found its way into other publications. In particular, there is overlap in topics with the book \cite{coarsebook} and the recently published paper \cite{bull}.  For the new material presented in later sections, complete proofs are provided.
\end{acknowledgement}


\section{Lipschitz maps}
As noted above, the initial geometric categories we will consider can be  arrived at by considering various Lipschitz type conditions that can hold of maps between metric spaces. 
\begin{definition}
A map $X\overset\phi\longrightarrow M$ between metric spaces $(X,d)$ and $(M,\partial)$ is 
\begin{itemize}
\item {\em Lipschitz} if there is a constant $K$ such that, for all $x,y\in X$, 
$$
\partial(\phi x, \phi y)\leqslant K\cdot d(x,y),
$$
\item {\em Lipschitz for large distances} if there is a constant $K$ such that, for all $x,y\in X$, 
$$
\partial(\phi x, \phi y)\leqslant K\cdot d(x,y) +K,
$$
\item {\em Lipschitz for short distances} if there are constants $K$ and $\delta>0$ such that, 
$$
\partial(\phi x, \phi y)\leqslant K\cdot d(x,y)
$$
whenever $x,y\in X$ satisfy $d(x,y)\leqslant \delta$.
\end{itemize}
\end{definition}
It should be obvious from the definition above that each of these classes of maps is closed under composition. For example, if 
$X\overset\phi\longrightarrow Y\overset\psi\longrightarrow Z$ are maps between metric spaces that are both Lipschitz for short distances, then so is the composition $\psi\circ \phi$. 

Equally immediate, but perhaps less expected, is the fact that these concepts allow us a splitting of the Lipschitz condition as a conjunction of two weaker properties.

\begin{lemma}\label{lipschitz}
For any map $X\overset\phi\longrightarrow M$ between metric spaces, we have 
\maths{
\phi \text{ is Lipschitz }\;\equi\; \phi \text{ is Lipschitz for both large and short distances.}
}
\end{lemma}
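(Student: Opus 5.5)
The forward implication is immediate from the definitions. If $\partial(\phi x,\phi y)\leqslant K\cdot d(x,y)$ for all $x,y$, then also $\partial(\phi x,\phi y)\leqslant K\cdot d(x,y)+K$, so $\phi$ is Lipschitz for large distances. For short distances we keep the same $K$ and take any $\delta>0$, say $\delta=1$.

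For the converse, suppose $\phi$ is Lipschitz for large distances with constant $K_1$, and Lipschitz for short distances with constants $K_2$ and $\delta>0$. The plan is to split into two cases according to whether $d(x,y)\leqslant\delta$ or $d(x,y)>\delta$.

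In the first case the short-distance hypothesis gives $\partial(\phi x,\phi y)\leqslant K_2\cdot d(x,y)$ directly.

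In the second case the additive constant can be absorbed into the multiplicative one, since $1<d(x,y)/\delta$. This gives
\[
\partial(\phi x,\phi y)\leqslant K_1\cdot d(x,y)+K_1\leqslant K_1\cdot d(x,y)+\frac{K_1}{\delta}\cdot d(x,y)=K_1\Big(1+\frac1\delta\Big)\cdot d(x,y).
\]

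Setting $K=\max\{K_2,\,K_1(1+1/\delta)\}$ handles both cases, so $\phi$ is Lipschitz with constant $K$.

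There is no real obstacle. The only point needing care is that the short-distance hypothesis supplies a definite positive threshold $\delta$, and this is exactly what allows the additive constant to be absorbed at distances above $\delta$.
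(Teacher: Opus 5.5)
Your proof is correct and follows the paper's argument: split according to whether $d(x,y)\leqslant\delta$ or $d(x,y)>\delta$, and in the second case absorb the additive constant using $1<d(x,y)/\delta$. The paper takes one common constant $K$ for both hypotheses (obtaining $K+\tfrac K\delta$) and does not write out the forward direction, but both differences are immaterial.
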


\begin{proof}
Indeed, suppose that $\phi$ is both Lipschitz for large and short distances and pick constants $K$ and $\delta>0$ such that, for all $x,y\in X$, 
$$
\partial(\phi x, \phi y)\leqslant K\cdot d(x,y) +K
$$
and so that 
$$
\partial(\phi x, \phi y)\leqslant K\cdot d(x,y)
$$
whenever $x,y\in X$ satisfy $d(x,y)\leqslant \delta$.
Then, given $x,y\in X$, either $d(x,y)\leqslant \delta$, in which case
$$
\partial(\phi x, \phi y)\leqslant K\cdot d(x,y),
$$
or $d(x,y)> \delta$, in which case
$$
\partial(\phi x, \phi y)\leqslant K\cdot d(x,y) +K < (K+\tfrac K\delta)\cdot d(x,y).
$$
Thus, $\phi$ is Lipschitz with constant $K+\tfrac K\delta$.
\end{proof}

As the different classes maps are closed under composition, they induce equivalence relations on the family of metrics on any single space $X$.
\begin{definition}
We define the relations of {\em bi-Lipschitz}, {\em quasi\--iso\-metric} or {\em locally bi-Lipschitz} equivalence on the collection of all metrics on a set $X$ by
\begin{align*}
d\sim_{\sf Lip} \partial 
&\equi \textrm{$\big(X,d\big)\underset{\sf id}{\overset{\sf id}\rightleftarrows} \big(X,\partial\big)$ are both Lipschitz}\\
&\equi \e K\;\; \frac 1K d\leqslant \partial \leqslant K \cdot d,\\
d\sim_{\sf QI} \partial 
&\equi \textrm{$\big(X,d\big)\underset{\sf id}{\overset{\sf id}\rightleftarrows} \big(X,\partial\big)$ are both Lipschitz for large distances}\\
&\equi \e K\;\; \frac 1K d- K\leqslant \partial \leqslant K \cdot d +K,\\
d\sim_{\sf locLip} \partial 
&\equi \textrm{$\big(X,d\big)\underset{\sf id}{\overset{\sf id}\rightleftarrows} \big(X,\partial\big)$ are both Lipschitz for short distances}.
\end{align*}
\end{definition}

\begin{exa}
The standard euclidean metric $d_1(x,y)=|x-y|$ on $\R$ is locally Lipschitz equivalent with the truncated metric $d_2(x,y)=\min\{1,|x-y|\}$.
On the other hand, $d_1$ is unbounded, whereas $d_2$ is bounded, so $d_1$ and $d_2$ are not quasi-isometric.
 
Similarly, the euclidean metric $d_1$ is quasi-isometric with 
$d_3(x,y)=\big\lceil |x-y|\big\rceil$, where $\lceil t\rceil$ denotes the upper integral part of a number $t$, but $d_1$ and $d_3$ are not locally Lipschitz equivalent. 

And finally, because the map $t\mapsto \sqrt t$ is not Lipschitz for short distances, whereas $t\mapsto t^2$ is not Lipschitz for large distances, $d_1$  is neither quasi-isometric nor   locally Lipschitz equivalent with the metric
$$
d_4(x,y)=\sqrt{|x-y|}.
$$
\end{exa}

The various concepts introduced above allow us to construct three categories
$$
{\sf Lipschitz}, \hspace{1cm} {\sf LocalLipschitz},\hspace{1cm}{\sf Quasimetric}
$$
whose objects are arbitrary sets equipped with respectively a bi-Lipschitz equivalence class of metrics, a locally bi-Lipschitz equivalence class of metrics or a quasi-isometry class of metrics.

By design, a function $X\overset \phi\longrightarrow M$ being Lipschitz, respectively Lipschitz for short or large distances, does not depend on the specific choice of metrics on $X$ and $M$, but only the corresponding equivalence classes of the metrics. Consequently, we may define the class of morphisms in the first two categories as the maps $X\overset \phi\longrightarrow M$ that are respectively Lipschitz or Lipschitz for short distances with respect to some or, equivalently, any choice of metrics from the selected equivalence classes. 

On the other hand, because ultimately we will want to count objects of different cardinalities as isomorphic in the quasimetric category, morphisms cannot be maps in the set theoretical sense. Instead, let us say that two maps $X\overset{\phi, \psi}\longrightarrow M$ between metric spaces are  {\em close} if 
$$
\sup_{x\in X}d\big(\phi x, \psi x\big)<\infty
$$
and note that, in this case, either both $\phi$ and $\psi$  are Lipschitz for large distances or none of them are. A morphism $X\overset {\Phi}\longrightarrow M$ in the category {\sf Quasimetric} is then simply a closeness class of functions from $X\maps\phi M$ that are  Lipschitz for large distances. In particular, this has the consequence that two quasimetric spaces $X$ and $M$  are isomorphic as quasimetric spaces if and only if there are two functions
$$
X\overset \phi \longrightarrow M \overset \psi \longrightarrow X,
$$
both Lipschitz for large distances, so that the compositions $\psi\circ \phi$ and $\phi\circ \psi$ are close to the identities on $X$ and $M$ respectively. 

\begin{exa}
The prototypical example of quasi-isometric spaces is $X=\R$ and $M=\Z$, both equipped with the quasi-isometric equivalence class of the euclidean distance, and where $\phi=\lfloor\cdot\rfloor$ is the lower integral part, whereas $\psi={\sf id}$. 
\end{exa}
Isomorphisms in the category {\sf Quasimetric} are termed {\em quasi-isometries}. In practice however, we shall  not distinguish between morphisms $\Phi$ and specific function representatives $\phi\in \Phi$ of these.

For completeness, let us also let 
$$
{\sf Metric}
$$
denote the category of metric spaces in which morphisms are (not necessarily surjective) isometries.

Evidently, every metric space  gives rise to an object in the category {\sf Lipschitz} by  simply forgetting the specific metric and only remembering its bi-Lipschitz equivalence class. Similarly, coarsening the bi-Lipschitz class to  the local bi-Lipshitz or quasimetric class, we pass to the categories  {\sf LocalLipschitz} and {\sf Quasimetric}, respectively. In this manner, we obtain a diagram, Figure \ref{first diagram}, of categories connected by {\em forgetful functors} that to each object of the domain category associates a less structured {\em reduct} in the target category.

\begin{figure}
\begin{tikzcd}
&&&&&{\sf Metric} \arrow[]{d}&\\
&&&& &{\sf Lipschitz} \arrow[]{dl}{}  \arrow[]{dr}{} &\\
&&&&{\sf Local Lipschitz} &  & {\sf Quasimetric} \\
\end{tikzcd}
\caption{Forgetful functors between geometric categories}
\label{first diagram}
\end{figure}


\section{Uniform and coarse spaces}
Evidently, every map between metric spaces that is Lipschitz for short distances is automatically uniformly continuous. In particular, this means that the uniform structures $\ku U_d$ and $\ku U_\partial$ given by two locally Lipschitz equivalent metrics $d$ and $\partial$ must coincide, i.e., $\ku U_d=\ku U_\partial$. However, to give a proper presentation of this and also to motivate the category of coarse spaces, recall the definition of uniform structures due to A. Weil \cite{weil}. 

\begin{definition}
A {\em uniform structure} on a set $X$  is a {filter} $\ku U$ of subsets $E\subseteq X\times X$, called {\em entourages}, satisfying
\begin{enumerate}
\item $\Delta \subseteq E$ for all $E\in \ku U$,
\item  if $E\in \ku U$, then $E^{\sf T}=\{(y,x)\del (x,y)\in E\}\in \ku U$,
\item if $E\in \ku U$, then $F\circ F=\{(x,z)\del \e y\, (x,y),(y,z)\in F\}\subseteq E$ for some $F\in \ku U$.
\end{enumerate}
A {\em uniform space} is just a set equipped with a uniform structure.
\end{definition}
Here $\Delta=\{(x,x)\del x\in X\}$ denotes the diagonal in $X\times X$. Recall also that a {\em filter} on a set $Z$ is a family $\ku F$ of subsets of $Z$ containing $Z$ and closed under supersets and finite intersections. 

\begin{exa}
If $d$ is a pseudometric (or \'ecart) on a set $X$, i.e., if $d$ is a metric except that possibly $d(x,y)=0$ for distinct $x,y\in X$, then the induced uniform structure $\ku U_d$ is the filter generated by the  family of entourages
$$
E_\alpha=\{(x,y)\del d(x,y)<\alpha\}
$$
for $\alpha>0$. That is, for a set $E\subseteq X\times X$, we have 
\begin{equation}\label{uniformpseudo}
E\in \ku U_d\quad\equi\quad \inf\{d(x,y)\del (x,y)\notin E\}>0.
\end{equation}
\end{exa}

Also, a morphism between two uniform spaces $(X,\ku U)$ and $(M,\ku V)$  is simply a uniformly continuous map $X\overset{\phi}{\longrightarrow} M$, meaning that, for every entourage $E\in \ku V$ (corresponding to $\eps>0$), there is an entourage $D\in \ku U$ (corresponding to $\delta>0$) such that 
$$
(x,y)\in D \saa (\phi x, \phi y)\in E.
$$
We let 
$$
{\sf Uniform}
$$ 
denote the category of uniform spaces. As noted above, if two metrics $d$ and $\partial$ are locally Lipschitz equivalent, then their uniform structures coincide. So this gives a forgetful functor 
$$
{\sf LocalLipshitz}\to {\sf Uniform}
$$ 

In analogy with the definition of uniform space, J. Roe \cite{roe} codified a notion of coarse space, which is related  to quasimetric spaces in the same way that uniform spaces are related to local Lipschitz spaces.

\begin{definition}
A {\em coarse structure} $\ku E$ on a set  $X$ is an ideal of subsets  $E\subseteq X\times X$, again called {\em entourages}, so that the diagonal  $\Delta$ belongs to $\ku E$ and 
\begin{itemize}
\item $E^{\sf T}\in \ku E$ and $E\circ E\in \ku E$  whenever $E\in \ku E$.
\end{itemize}
A {\em coarse space} is just a set equipped with a coarse structure.
\end{definition}

\begin{exa}
The coarse structure $\ku E_d$ of a pseudometric space $(X,d)$ is the ideal (rather than filter as in the case of the uniform structure) generated by the entourages 
$$
E_\alpha=\{(x,y)\del d(x,y)<\alpha\}
$$
for $\alpha<\infty$. That is, for a set $E\subseteq X\times X$, we have 
\begin{equation}\label{coarsepseudo}
E\in \ku E_d\quad\equi\quad \sup\{d(x,y)\del (x,y)\in E\}<\infty.
\end{equation}
\end{exa}

For a coarse structure, the focus is thus on the asymptotic behaviour of entourages $E_\alpha$ as $\alpha\to \infty$, as opposed to the uniform structure where one is primarily interested in the behaviour as $\alpha\to 0$. 

\begin{observation}As can be seen from (\ref{uniformpseudo}) and (\ref{coarsepseudo}), at least in the case of pseudometric spaces, there is a sort of duality between the uniform and coarse structures. Namely, for a function $d\colon X\times X\to [0,\infty]$, define two families
\maths{
\ku U_d&=\big\{E\subseteq X\times X\del  \inf\{d(x,y)\del (x,y)\notin E\}>0\big\},\\
\ku E_d&=\big\{E\subseteq X\times X\del  \sup\{d(x,y)\del (x,y)\in E\}<\infty\big\}.
}
Then, for all $E\subseteq X\times X$, we have 
$$
\complement E\in \ku E_d\quad\equi  \quad E\in \ku U_{\tfrac 1d},
$$
where $\complement E=X\times X\setminus E$ denotes the complement of $E$ in $X\times X$.
Therefore, when $d$ is a pseudometric on $X$, the coarse structure $\ku E_d$ is just the dual\footnote{The {\em dual} of a family $\ku F\subseteq \ku P(\Omega)$ is the collection of complements of elements of $\ku F$, that is, the family $\ku F^*=\{\complement A\del A\in \ku F\}$. This constitutes a duality between ideals and filters on a set $\Omega$.} ideal of $\ku U_{\tfrac 1d}$, whereas $\ku U_d$ is the dual filter of $\ku E_{\tfrac 1d}$.
\end{observation}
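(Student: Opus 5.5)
The plan is to prove the displayed equivalence by computing directly with the definitions, using the convention that $\tfrac 1d(x,y)=\tfrac{1}{d(x,y)}\in[0,\infty]$, where $\tfrac10=\infty$ and $\tfrac1\infty=0$. Then $\tfrac1d$ is again a function $X\times X\to[0,\infty]$, so the families $\ku U_{\tfrac1d}$ and $\ku E_{\tfrac1d}$ make sense. Fix $E\subseteq X\times X$. By definition, $\complement E\in\ku E_d$ holds exactly when
$$
s=\sup\{d(x,y)\del (x,y)\notin E\}<\infty.
$$
On the other side, $E\in\ku U_{\tfrac1d}$ holds exactly when
$$
t=\inf\{\tfrac{1}{d(x,y)}\del (x,y)\notin E\}>0.
$$
Both conditions are quantified over the same index set, namely the complement of $E$, so it suffices to relate $s$ and $t$.

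The key observation is that $r\mapsto \tfrac1r$ is an order-reversing bijection of $[0,\infty]$. It therefore converts the supremum of a subset $A\subseteq[0,\infty]$ into the infimum of the image set, so $t=\tfrac1s$. I would check the degenerate cases explicitly:
\begin{itemize}
\item If $\complement E=\emptyset$, then $s=\sup\emptyset=0$ and $t=\inf\emptyset=\infty$. Both conditions hold.
\item If $\complement E\neq\emptyset$, then $s<\infty$ if and only if $t=\tfrac1s>0$.
\end{itemize}
Concretely, $d\leqslant s$ on $\complement E$ gives $\tfrac1d\geqslant\tfrac1s>0$. Conversely, $\tfrac1d\geqslant t>0$ gives $d\leqslant\tfrac1t$. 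This proves $\complement E\in\ku E_d\equi E\in\ku U_{\tfrac1d}$.

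The second sentence of the observation follows by specialising to a pseudometric $d$. Taking complements shows $\ku E_d=\{\complement E\del E\in\ku U_{\tfrac1d}\}=(\ku U_{\tfrac1d})^*$. For the other identity, apply the same equivalence with $\tfrac1d$ in place of $d$, noting that $\tfrac{1}{1/d}=d$ on $[0,\infty]$. This gives $\complement E\in\ku E_{\tfrac1d}\equi E\in\ku U_d$, hence $\ku U_d=(\ku E_{\tfrac1d})^*$. Finally, (\ref{uniformpseudo}) and (\ref{coarsepseudo}) identify these families with the uniform and coarse structures of the pseudometric.

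The only point needing care is the bookkeeping of the values $0$ and $\infty$ and of the empty complement. In particular, pairs with $d(x,y)=0$, which include the diagonal and, for a pseudometric, possibly distinct points, give $\tfrac1d=\infty$, and these do not affect the infimum unless every pair is of this kind. With the conventions above, all cases reduce to the monotonicity of $r\mapsto\tfrac1r$, so I expect no genuine obstacle.
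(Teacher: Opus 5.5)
Your proposal is correct and matches the argument the paper leaves implicit. Both conditions range over the same index set $\complement E$, and they are linked by the order-reversing involution $r\mapsto 1/r$ of $[0,\infty]$, with $1/0=\infty$ and $1/\infty=0$; the two duality statements then follow by taking complements and applying the equivalence to $\tfrac 1d$ in place of $d$, which is exactly what you do.
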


Any two quasi-isometric metrics on a set $X$ will induce the same coarse structure and therefore every quasi-metric space defines a unique coarse structure on the same underlying set. To make the class of coarse spaces into a category {\sf Coarse}, we just need to describe the morphisms. For this, a map $X\overset\phi\longrightarrow M$ between coarse spaces $(X,\ku E)$ and $(M,\ku F)$ is said to be {\em controlled} if, for every entourage $E\in \ku E$, there is an entourage $F\in \ku F$ so that 
$$
(x,y)\in E\saa (\phi x,\phi y)\in F.
$$
When $\ku E$ and $\ku F$ are the coarse structures associated with metrics $d$ on $X$ and  $\partial$ on $M$, this just means that there is a monotone increasing function $\omega\colon \R_{\geqslant 0}\to \R_{\geqslant 0}$ for which $$\partial  (\phi x,\phi y)\leqslant \omega\big(d(x,y)\big)$$ for all $x,y\in X$.

\begin{figure}
\begin{tikzcd}
&&&&&{\sf Metric} \arrow[]{d}&\\
&&&& &{\sf Lipschitz} \arrow[]{dl}{}  \arrow[]{dr}{} &\\
&&&&{\sf Local Lipschitz} \arrow[]{d}{} &  & {\sf Quasimetric}\arrow[]{d}{} \\
&&&&{\sf Uniform} & &{\sf Coarse}
\end{tikzcd}
\caption{Expanded diagram of forgetful functors between  geometric categories}
\label{second diagram}
\end{figure}

Morphisms in  {\sf Coarse} are then closeness classes of controlled maps, where  two maps $X\overset{\phi,\psi}\longrightarrow M$ are {\em close} provided that there is an entourage $F$ on $M$ so that $(\phi x, \psi x)\in F$ for all $x\in X$.
With this definition, the  forgetful map 
$$
{\sf Quasimetric}\longrightarrow {\sf Coarse}
$$ 
becomes a functor.


\section{The uniform and coarse structure on a topological group}
On every topological group $G$, one can define several uniform structures of interest. In this connection, the left uniform structure is the most appropriate.
\begin{definition}
The {\em left uniform structure}  on a topological group $G$ is the ideal $\ku U_L$ on $G\times G$ generated by the family of entourages
$$
E_V=\{(g,f)\in G\times G\del g\inv f\in V\},
$$
where $V$ ranges over identity neighbourhoods in $G$. 
\end{definition}
We note that the left uniform structure is {\em compatible} with the topology on $G$ in the sense that, for every $g\in G$, sets of the form
$$
E[g]=\{f\in G\del (f,g)\in E\},
$$
with $E\in \ku U_L$, form a neighbourhood basis at $g$.

As always, metrisable uniform structures are generally much easier to manipulate and hence the following result is fundamental. To avoid unnecessary complications, henceforth, all topological groups will be assumed to be Hausdorff. 

\begin{theorem}\cites{birkhoff,kakutani}\label{birkhoff-kakutani}{}
The following conditions are equivalent for any  topological group $G$.
\begin{enumerate}
\item The topology on $G$ is first countable,
\item the topology on $G$ is metrisable, 
\item the left uniform structure on $G$  is metrisable.
\end{enumerate}
Furthermore, in this case, $G$ admits a {\em compatible, left-invariant} metric $d$, i.e., inducing the topology on $G$ and so that
$$
d(hg,hf)=d(g,f)
$$ 
for all $f,g,h\in G$.
Moreover,  every such compatible left-invariant metric  will induce the left-uniform structure on $G$.
\end{theorem}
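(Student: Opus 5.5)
The implications (3)$\Rightarrow$(2)$\Rightarrow$(1) are routine. A metric inducing $\ku U_L$ induces the topology, because $\ku U_L$ is compatible with the topology. A metrisable space is first countable. So the substance lies in (1)$\Rightarrow$ the existence of a compatible left-invariant metric, together with the final clause, which in turn gives (3).

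\textbf{Step 1: a chain of neighbourhoods.} Suppose $(W_n)$ is a countable neighbourhood basis at $1$. Using continuity of multiplication and inversion, I will choose symmetric identity neighbourhoods $V_0=G\supseteq V_1\supseteq V_2\supseteq\cdots$ with
$$
V_{n+1}V_{n+1}V_{n+1}\subseteq V_n \quad\text{and}\quad V_n\subseteq W_n.
$$
Hausdorffness then gives $\bigcap_n V_n=\{1\}$.

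\textbf{Step 2: the metric.} Define $\ell(g)=\inf\{2^{-n}\del g\in V_n\}$. Set
$$
|g|=\inf\Big\{\sum_{i=1}^k \ell(g_i)\Del g=g_1\cdots g_k\Big\}
\quad\text{and}\quad
d(g,f)=|g\inv f|.
$$
Symmetry of the $V_n$ gives $|g\inv|=|g|$, and subadditivity $|gh|\leqslant|g|+|h|$ holds by concatenating factorisations. Hence $d$ is a left-invariant pseudometric.

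\textbf{Step 3: the key estimate (main obstacle).} The heart of the proof is the inequality
$$
\tfrac12\,\ell(g)\leqslant |g|\leqslant \ell(g).
$$
I would prove the lower bound in the standard Birkhoff–Kakutani way, by induction on $k$: if $\sum_{i=1}^k\ell(g_i)<2^{-n}$, then $g_1\cdots g_k\in V_{n-1}$. The first step is to split the sum at the largest index $j$ with $\sum_{i<j}\ell(g_i)\leqslant\frac12\sum_i\ell(g_i)$. This gives three pieces: the product of the first $j-1$ factors, the single factor $g_j$, and the product of the remaining factors. Each piece has total weight at most half of the whole sum, so by the induction hypothesis each lies in $V_n$. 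Then $V_nV_nV_n\subseteq V_{n-1}$ finishes the induction. The off-by-one bookkeeping between weights $2^{-n}$ and membership in $V_n$ is exactly where care is needed; I expect this to be the delicate part. With the estimate in hand, $d(g,f)=0$ forces $g\inv f\in\bigcap V_n=\{1\}$, so $d$ is a metric. The entourages $\{d<2^{-n}\}$ and $E_{V_n}$ are nested within each other up to a shift in $n$, so $d$ induces exactly $\ku U_L$ and hence the topology. This proves (1)$\Rightarrow$(3) and the ``furthermore'' clause.

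\textbf{Step 4: the ``moreover'' clause.} Let $\partial$ be any compatible left-invariant metric. Its balls $B_\epsilon=\{g\del\partial(1,g)<\epsilon\}$ form a neighbourhood basis at $1$, by compatibility. By left invariance,
$$
\{(g,f)\del\partial(g,f)<\epsilon\}=E_{B_\epsilon}.
$$
Therefore the uniformity of $\partial$ is generated by the same entourages as $\ku U_L$, so the two coincide.
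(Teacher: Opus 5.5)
Your proposal is correct and follows the paper's route: Birkhoff's construction from Lemma \ref{lemma:birkhoff}, with symmetric neighbourhoods satisfying $V_{n+1}^3\subseteq V_n$, dyadic weights, an infimum over factorisations and the split at the half-way index, only with the chain indexed decreasingly from $V_0=G$ instead of by $\Z$; the remaining implications and the identity $\{\partial<\eps\}=E_{B_\eps}$ for the final clause are likewise sound. Two cosmetic slips in Step 3 need fixing: the middle factor $g_j$ need not carry at most half the total weight, but $\ell(g_j)\leqslant\sum_i\ell(g_i)<2^{-n}$ already gives $g_j\in V_{n+1}\subseteq V_n$; and your threshold form of the induction only yields $\tfrac14\ell(g)\leqslant|g|$, whereas inducting on $\ell(g_1\cdots g_k)\leqslant 2\sum_i\ell(g_i)$ as the paper does gives $\tfrac12$, though either constant suffices.
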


Underlying this is a basic metrisation technique due to G. Birkhoff \cite{birkhoff} that is useful in many other contexts. For transparency, we formulate it for the associated pseudolength function. So let us recall that a {\em pseudolength function} on a group $G$ is a map $\ell\colon G\to \R_{\geqslant 0}$ so that, for all $x,y\in G$, 
\begin{enumerate}
\item $\ell(1)=0$,
\item $\ell(xy)\leqslant \ell(x)+\ell(y)$,
\item $\ell(x\inv)=\ell(x)$.
\end{enumerate}
Observe also that there is a bijective correspondence between pseudolength functions $\ell$ and left-invariant pseudometrics $d$ on $G$ given by $\ell(x)=d(x,1)$ and $d(x,y)=\ell(x\inv y)$. Moreover, the pseudolength $\ell$ is an actual {\em length function}, i.e., $\ell(x)\neq 0$ for all $x\neq 1$, if and only if the corresponding pseudometric $d$ is a  metric. Let us also recall that a subset $A$ of a group $G$ is said to be {\em symmetric} in case
$$
A\inv:= \{a\inv\del a\in A\}=A.
$$

\begin{lemma}\label{lemma:birkhoff}
Let $G$ be a topological group and $(V_n)_{n\in\Z}$
an increasing chain of symmetric open identity neighbourhoods satisfying  $G=\bigcup\limits_{n\in \Z}V_n$ and  $V_n^3\subseteq V_{n+1}$ for all $n\in \Z$.
Define
$$
L(g)=\inf\big\{2^n\del g\in V_n\big\}
$$ 
and put
$$
\ell(g)=\inf \Big\{\sum_{i=1}^{k}L(h_i)\Del g=h_1h_2\cdots h_k\Big\}.
$$
Then
$$
\tfrac 12L(g)\leqslant  \ell(g)\leqslant L(g)
$$
and $\ell$ is a continuous pseudolength function on $G$.

Moreover, if $(V_n)_{n\in\Z}$ is actually a neighbourhood basis at the identity, then $\ell$ is a length function and the associated metric $d$ induces the topology on $G$.
\end{lemma}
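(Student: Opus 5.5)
The inequality $\ell\leqslant L$ is immediate, taking $k=1$ and $h_1=g$, and $\ell(1)=0$ since $1\in V_n$ for all $n$. Symmetry comes from the symmetry of each $V_n$, which gives $L(g\inv)=L(g)$; reversing a factorisation $g=h_1\cdots h_k$ to $g\inv=h_k\inv\cdots h_1\inv$ then preserves the sum. Subadditivity is immediate because we may concatenate factorisations of $x$ and $y$ to get one of $xy$. So the main work is the lower bound $\tfrac12 L(g)\leqslant \ell(g)$. By the definition of $\ell$, this reduces to the following statement about an arbitrary factorisation:
$$
L(h_1h_2\cdots h_k)\leqslant 2\sum_{i=1}^k L(h_i).
$$

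I would prove this by induction on $k$, following Birkhoff's classical argument. The case $k=1$ is trivial. Suppose $k\geqslant 2$ and let $S=\sum_i L(h_i)$. If $S=0$, each $h_i$ lies in $\bigcap_n V_n$. This intersection is a subgroup, because $V_n^3\subseteq V_{n+1}$ and the chain is increasing, so the product lies in it too and has $L=0$. Otherwise choose the largest index $j$ with $\sum_{i<j}L(h_i)\leqslant S/2$. Then $\sum_{i>j}L(h_i)\leqslant S/2$ as well. By induction, $a=h_1\cdots h_{j-1}$ and $b=h_{j+1}\cdots h_k$ satisfy $L(a),L(b)\leqslant S$. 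Trivially $L(h_j)\leqslant S$.

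Now let $n$ be the integer with $2^{n}\leqslant S<2^{n+1}$. Since $L$ takes values in $\{0\}\cup\{2^m\}$, each of $a$, $h_j$ and $b$ has $L\leqslant 2^n$, so each lies in $V_n$. Hence $g=ah_jb\in V_n^3\subseteq V_{n+1}$, and $L(g)\leqslant 2^{n+1}\leqslant 2S$. One point needs checking here: $L(x)\leqslant 2^n$ should give $x\in V_n$. The infimum is either attained at some $m\leqslant n$, in which case $V_m\subseteq V_n$, or it equals $0$, in which case $x$ lies in every $V_m$. Either way $x\in V_n$. I expect this splitting and induction step to be the main obstacle, mostly for getting the factor $2$ right.

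For continuity, I would use $|\ell(xg)-\ell(g)|\leqslant \ell(x)\leqslant L(x)$, which follows from subadditivity and symmetry. Given $\eps>0$, pick $n$ with $2^n<\eps$. Then $x\in V_n$ gives $|\ell(xg)-\ell(g)|<\eps$, and since $V_n$ is an open identity neighbourhood, $\ell$ is continuous at every point.

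Finally, suppose $(V_n)$ is a neighbourhood basis at $1$. Hausdorffness gives $\bigcap_n V_n=\{1\}$, so for $g\neq1$ we have $L(g)>0$, hence $\ell(g)\geqslant \tfrac12L(g)>0$. The metric balls $\{g\del \ell(f\inv g)<r\}$ are sandwiched between translates $fV_m$ by the two-sided comparison with $L$. Indeed $\ell(x)<2^{m-1}$ forces $L(x)<2^m$, so $x\in V_{m-1}\subseteq V_m$, and $x\in V_m$ gives $\ell(x)\leqslant 2^m$. Since the translates $fV_m$ form a neighbourhood basis at $f$, the metric $d$ induces the topology.
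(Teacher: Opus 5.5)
Your proof is correct and follows the paper's argument. It uses the same induction on $k$, splits the product at the index where the partial sums of the $L(h_i)$ pass $S/2$, and then applies $V_n^3\subseteq V_{n+1}$. The paper takes $2^m$ to be the maximum of the three $L$-values while you fix $n$ by $2^n\leqslant S<2^{n+1}$, which makes no difference. You also correctly supply the checks the paper omits: the pseudolength axioms, continuity via $|\ell(xg)-\ell(g)|\leqslant L(x)$, and the comparison of $d$-balls with the translates $fV_m$ for the final assertion.
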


\begin{proof}
Let us just verify the bounds $\frac 12L(g)\leqslant  \ell(g)\leqslant L(g)$ of which only the first inequality is non-trivial.  By induction on $k\geqslant 1$ we show that, for all products $h_1\cdots h_k$, one has
\begin{equation}\label{bk}
L(h_1\cdots h_k)\leqslant 2\cdot \sum_{i=1}^{k}L(h_i).
\end{equation}
This will suffice to show that $\frac 12L(g)\leqslant  \ell(g)$ for all $g\in G$. 

So let $k$ be given and suppose that (\ref{bk}) holds for all smaller values of $k$. Consider a product $h_1\cdots h_k$. To see that (\ref{bk}) holds, note that, if $\sum_{i=1}^{k}L(h_i)=0$, then $h_i\in \bigcap_{n\in \Z}V_n$ for all $i$ and so also $h_1\cdots h_k\in \bigcap_{n\in \Z}V_n$, whereby also $L(h_1\cdots h_k)=0$. So assume instead that $\sum_{i=1}^{k}L(h_i)>0$ and let $0\leqslant n< k$ be maximal so that 
\begin{equation}\label{1}
\sum_{i=1}^{n}L(h_i)\leqslant \frac 12\sum_{i=1}^{k}L(h_i), 
\end{equation}
whereby
$$
\sum_{i=1}^{n+1}L(h_i)> \frac 12\sum_{i=1}^{k}L(h_i)
$$
and so 
\begin{equation}\label{2}
\sum_{i=n+2}^{k}L(h_i)< \frac 12\sum_{i=1}^{k}L(h_i).
\end{equation}
It follows by the inductive hypothesis and inequalities  (\ref{1}) and (\ref{2}) that
$$
2^m:=\max\{L(h_1\cdots h_n), L(h_{n+1}),    L(h_{n+2}\cdots h_k)\}\leqslant \sum_{i=1}^{k}L(h_i),
$$
whereby 
$$
h_1\cdots h_k=(h_1\cdots h_n)\cdot h_{n+1}\cdot (h_{n+2}\cdots h_k)\in V_m\cdot V_m\cdot V_m\subseteq V_{m+1}
$$
and hence
$$
L(h_1\cdots h_k)\leqslant 2^{m+1}\leqslant 2\cdot \sum_{i=1}^{k}L(h_i),
$$
which proves the inductive step.
\end{proof}

Independently, S. Kakutani \cite{kakutani} gave a different and more complicated proof in which the assumption $V_n^3\subseteq V_{n+1}$ is weakened to the optimal condition $V_n^2\subseteq V_{n+1}$. This strengthening is vital for the proof of Theorem \ref{thm:minimal}.

Lemma \ref{lemma:birkhoff} allows us to establish the following classical  characterisation of the left uniformity.
\begin{theorem}\label{weil}
For every topological group we have 
$$
\ku U_L=\bigcup\big\{\ku U_d\del d\text{ is a continuous left-invariant pseudometric on }G\big\}.
$$
\end{theorem}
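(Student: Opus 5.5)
We prove the two inclusions separately. The reverse inclusion is routine; the forward one will come from Lemma \ref{lemma:birkhoff}.

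\emph{Reverse inclusion $\supseteq$.} Let $d$ be a continuous left-invariant pseudometric on $G$ and let $E\in \ku U_d$, so $E\supseteq E_\alpha=\{(g,f)\del d(g,f)<\alpha\}$ for some $\alpha>0$.
\begin{itemize}
\item Put $V=\{h\in G\del d(h,1)<\alpha\}$. This is an open identity neighbourhood because $d$ is continuous.
\item By left-invariance, $d(g,f)=d(1,g\inv f)$, so $g\inv f\in V$ if and only if $d(g,f)<\alpha$.
\item Hence $E_V=E_\alpha\subseteq E$, and since $\ku U_L$ is closed under supersets (it is the filter generated by the $E_V$), $E\in\ku U_L$.
\end{itemize}

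\emph{Forward inclusion $\subseteq$.} Let $E\in \ku U_L$, so $E\supseteq E_W$ for some identity neighbourhood $W$. We must produce a single continuous left-invariant pseudometric $d$ with $E_W\in \ku U_d$. The plan is to build a chain satisfying the hypotheses of Lemma \ref{lemma:birkhoff} whose zeroth term lies inside $W$.
\begin{itemize}
\item \emph{Negative indices.} Set $V_0$ to be a symmetric open identity neighbourhood with $V_0\subseteq W$, for instance $V_0=U\cap U\inv$ with $U\subseteq W$ open containing $1$. Then choose recursively symmetric open identity neighbourhoods $V_{-1},V_{-2},\dots$ with $V_{n-1}^3\subseteq V_n$. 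This is possible by continuity of multiplication at $(1,1,1)$ and of inversion, intersecting with the inverse to keep symmetry.
\item \emph{Nonnegative indices.} Set $V_n=G$ for all $n\geqslant 1$. These are symmetric and open, $G=\bigcup_n V_n$, and $V_0^3\subseteq G=V_1$.
\item \emph{Monotonicity.} The chain is increasing, since $V_{n-1}\subseteq V_{n-1}^3\subseteq V_n$ because $1\in V_{n-1}$.
\end{itemize}

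Lemma \ref{lemma:birkhoff} now gives a continuous pseudolength $\ell$ with $\tfrac12 L\leqslant \ell\leqslant L$. Let $d(g,f)=\ell(g\inv f)$ be the associated left-invariant pseudometric. It is continuous: by the triangle inequality $|d(g,f)-d(g',f')|\leqslant \ell(g\inv g')+\ell(f\inv f')$, and continuity of $\ell$ at $1$ with $\ell(1)=0$ forces both terms to be small when $g'$ is near $g$ and $f'$ is near $f$.

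It remains to check $E_W\in\ku U_d$. Suppose $d(g,f)<\tfrac12$. Then $L(g\inv f)\leqslant 2\ell(g\inv f)<1=2^0$. Since $L$ takes values $2^n$ or $0$, this gives $g\inv f\in V_n$ for some $n<0$, hence $g\inv f\in V_0\subseteq W$. Therefore $E_{1/2}\subseteq E_W\subseteq E$, and $E\in\ku U_d$.

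The only delicate step is the construction of the chain: the indices $n\geqslant 1$ must be handled (taking $V_n=G$) so that the hypothesis $G=\bigcup V_n$ holds without any countability assumption on $G$. Everything else is a direct application of the lemma.
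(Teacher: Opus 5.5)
Your proof is correct and follows the route the paper indicates. The paper only asserts that the theorem follows from Lemma \ref{lemma:birkhoff}, and you carry out exactly that derivation: you shrink a symmetric chain into the given neighbourhood $W$ and pad it with $V_n=G$ for $n\geqslant 1$, so no countability is needed, while the reverse inclusion is the routine left-invariance computation.
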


Theorem \ref{weil} can of course alternatively be viewed as the definition of the left uniform structure on $G$ and therefore serves as the model for a canonical coarse structure on $G$.
\begin{definition}\label{ros}\cite{coarsebook}
The {\em left coarse structure} $\ku E_L$ on a topological group $G$ is defined by
$$
\ku E_L=\bigcap\big\{\ku E_d\del d\text{ is a continuous left-invariant pseudometric on }G\big\}.
$$
\end{definition}

Combining Theorem \ref{weil} and Definition \ref{ros} with the equivalences given in (\ref{uniformpseudo}) and (\ref{coarsepseudo}), we find that, for a subset $E\subseteq G\times G$, 
\begin{equation}\label{uniformpseudo2}
E\in \ku U_L\quad\equi\quad \e d\; \inf_{(x,y)\notin E}\;d(x,y)>0,
\end{equation}
whereas
\begin{equation}\label{coarsepseudo2}
E\in \ku E_L\quad\equi\quad \a d\; \sup_{(x,y)\in E}\;d(x,y)<\infty,
\end{equation}
where $d$ ranges over continuous left-invariant pseudometrics on $G$. 


\section{Metrisability of  coarse structure}
With Definition \ref{ros} in hand, we may ask if, in similarity with the Birkhoff--Kakutani theorem (Theorem \ref{birkhoff-kakutani}),  we may characterise metrisability of the coarse structure on the group. Although this is possible in general, the best description is available for a restricted class of topological groups.

\begin{definition}
A topological space $X$ is {\em Polish} if it is separable and the topology can be induced by a complete metric. A topological group is {\em Polish} if it is Polish as a topological space.
\end{definition}

Observe that, by the Birkhoff--Kakutani Theorem, every Polish group admits a compatible {\em left-invariant} metric. Simultaneously, it also admits a compatible {\em complete} metric. However, only some Polish groups admits compatible metrics that are left-invariant and complete. These are the so called {\em Weil complete} Polish groups, namely, those that are complete in the left uniformity $\ku U_L$.

The characterisation of metrisability involves a notion of bounded sets.
\begin{definition}
A subset $A$ of a coarse space $(X,\ku E)$ is {\em (coarsely) bounded} in case $A\times A\in E$.
\end{definition}
Thus, a subset $A$ of a topological group $G$ is bounded if and only if 
$$
\sup_{x,y\in A}d(x,y)<\infty
$$
for every continuous left-invariant pseudometric $d$ on $G$. 

Using Lemma \ref{lemma:birkhoff} we may easily establish the following characterisation.

\begin{lemma}\label{char OB}\cite{coarsebook}
The following conditions are equivalent for a subset $A$ of a topological group $G$.
\begin{enumerate}
\item $A$ is coarsely bounded,
\item for every continuous left-invariant pseudometric $d$ on $G$,
$$
{\sf diam}_d(A)<\infty,
$$
\item for every continuous isometric action  on a metric space $G\curvearrowright (X,d)$ and every $x\in X$, we have
$$
{\sf diam}_d(A\cdot x)<\infty,
$$
\item for every increasing exhaustive sequence $V_1\subseteq V_2\subseteq \ldots\subseteq G$ of open subsets with $V_n^2\subseteq V_{n+1}$, we have $A\subseteq V_n$ for some $n$.
\end{enumerate}
\end{lemma}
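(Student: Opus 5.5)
I will prove the cycle of implications (1)$\Leftrightarrow$(2)$\Rightarrow$(3)$\Rightarrow$(4)$\Rightarrow$(2). The equivalence (1)$\Leftrightarrow$(2) holds by definition. A set $A$ is coarsely bounded exactly when $A\times A\in\ku E_L$. By (\ref{coarsepseudo2}) this means $\sup_{x,y\in A}d(x,y)<\infty$ for every continuous left-invariant pseudometric $d$, which is ${\sf diam}_d(A)<\infty$.

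For (2)$\Rightarrow$(3), take a continuous isometric action $G\curvearrowright(X,d)$ and a point $x\in X$. Define $\partial(g,f)=d(gx,fx)$. This is a pseudometric on $G$, and it is left-invariant because the action is by isometries: $\partial(hg,hf)=d(hgx,hfx)=d(gx,fx)$. It is continuous because $g\mapsto gx$ is continuous, a consequence of the continuity of the action. Hence ${\sf diam}_d(A\cdot x)={\sf diam}_\partial(A)<\infty$ by (2).

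For (3)$\Rightarrow$(4), I argue by contraposition. Suppose $V_1\subseteq V_2\subseteq\cdots$ is an open exhaustive chain with $V_n^2\subseteq V_{n+1}$ and $A\not\subseteq V_n$ for every $n$. I first symmetrise: $W_n=V_n\cap V_n\inv$ is still open and increasing. Each $g$ lies in some $V_m$, and $g\inv$ lies in some $V_{m'}$, so $W_n$ is exhaustive. Also $W_n^2\subseteq V_{n+1}\cap V_{n+1}\inv$, since $(ab)\inv=b\inv a\inv\in V_n^2$. Next I pass to $U_k=W_{2k}$, which satisfies $U_k^3\subseteq U_k^4\subseteq W_{2k+2}=U_{k+1}$. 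Note that $1\in V_n$ for some $n$, and then $V_n\subseteq V_n^2\subseteq V_{n+1}$ makes the chain behave well, so after reindexing all $U_k$ are identity neighbourhoods.

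To apply Lemma \ref{lemma:birkhoff}, I need a chain indexed by $\Z$. For $n\leqslant 0$ I set $U_n=U_1$. Then $U_n^3\subseteq U_{n+1}$ fails unless $U_1^3\subseteq U_1$. Rather than repair this, I use a simpler route: let $U_n$ for $n\leqslant 0$ be a decreasing sequence of symmetric open identity neighbourhoods chosen recursively so that $U_{n-1}^3\subseteq U_n$. This is possible by continuity of multiplication. The lemma then yields a continuous pseudolength $\ell$ with $\tfrac12L\leqslant\ell\leqslant L$, and hence a continuous left-invariant pseudometric $d(g,f)=\ell(g\inv f)$ on $G$.

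The induced action of $G$ on itself by left translation is continuous and isometric for $d$. Since $A\not\subseteq U_k$ for every $k$, there are $a_k\in A$ with $L(a_k)\geqslant 2^{k}$. It follows that $d(a_k,1)=\ell(a_k)\geqslant 2^{k-1}$. With $x=1$ this shows that $A\cdot 1$ has infinite diameter, contradicting (3).

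For (4)$\Rightarrow$(2), let $d$ be a continuous left-invariant pseudometric with length $\ell$. Put $V_n=\{g\mid \ell(g)<2^n\}$. These sets are open by continuity of $\ell$, increasing, and exhaustive. By the triangle inequality $V_n^2\subseteq V_{n+1}$. Condition (4) then gives $A\subseteq V_n$ for some $n$, so ${\sf diam}_d(A)\leqslant 2^{n+1}$.

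I expect the main obstacle to be the construction in (3)$\Rightarrow$(4): converting a chain with only $V_n^2\subseteq V_{n+1}$, not necessarily symmetric or containing the identity, into a $\Z$-indexed chain that satisfies the hypotheses of Lemma \ref{lemma:birkhoff}. Skipping every other index handles the cube condition, and continuity of multiplication handles the negative indices.
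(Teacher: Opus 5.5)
Your argument is correct and takes the route the paper has in mind. The paper gives no details beyond saying the lemma follows easily from Lemma~\ref{lemma:birkhoff}, and your (3)$\Rightarrow$(4) step is exactly how that lemma gets used: you symmetrise, pass to every other index to get cubes, and add small neighbourhoods at negative indices. One point needs tidying: there $d$ is only a pseudometric while (3) is stated for metric spaces, so apply (3) to the metric identification $G/\{(g,f)\colon d(g,f)=0\}$, on which left translation still induces a continuous isometric action, and use $d(a_k,a_0)\geqslant d(a_k,1)-d(a_0,1)\to\infty$ for a fixed $a_0\in A$ to get infinite diameter of the orbit.
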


From Lemma \ref{char OB} it follows immediately that the class of bounded sets in a topological group $G$ forms an ideal of subsets containing all singletons and  that
$$
\ov A,\quad AB, \quad A\inv
$$
are bounded whenever $A,B\subseteq G$ are bounded.

Observe that, if $\{x_1, x_2, \ldots\}$ is a countable dense subset of a topological group $G$ and $V$ is an open identity neighbourhood, then 
$$
V_n=\big(\{x_1,\ldots, x_n\}\cdot V\big)^{2^n}
$$
defines an increasing exhaustive sequence of open subsets of $G$ so that $V_n^2\subseteq V_{n+1}$. Applying Lemma \ref{char OB} to this sequence, we finally get a compact description of boundedness.

\begin{proposition}\cite{coarsebook}
Let $G$ be a Polish group. Then a subset $A$ is bounded if and only if, for every identity neighbourhood $V$, there is a finite set $F\subseteq G$ and a power $k\geqslant 1$ so that 
$$
A\subseteq (FV)^k.
$$
\end{proposition}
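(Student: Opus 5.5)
Both directions go through the characterisation of coarse boundedness in Lemma \ref{char OB}, specifically the equivalence of (1) and (4).

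For the forward direction, suppose $A$ is bounded and let $V$ be an identity neighbourhood. Shrinking $V$ only makes the conclusion harder, so we may assume $V$ is open. Since $G$ is Polish, it is separable, so we fix a countable dense set $\{x_1,x_2,\ldots\}$ and set
$$
V_n=\big(\{x_1,\ldots,x_n\}\cdot V\big)^{2^n},
$$
as in the remark preceding the statement. We then check the hypotheses of Lemma \ref{char OB}(4):
\begin{itemize}
\item Each $V_n$ is open, being a product of open sets.
\item The sequence is increasing, because $\{x_1,\ldots,x_n\}V\subseteq\{x_1,\ldots,x_{n+1}\}V$ and, after fixing a point $v_0\in V$, one has $1\in x_1V\cdot(x_1 v_0)^{-1}$-type considerations; more simply, we may assume $1\in V$ and $x_1=1$, so that $1\in\{x_1,\ldots,x_n\}V$ and higher powers contain lower ones.
\item We have $V_n^2=(\{x_1,\ldots,x_n\}V)^{2^{n+1}}\subseteq V_{n+1}$.
\item Exhaustiveness holds because, for any $g\in G$, the set $gV^{-1}$ is a nonempty open set and so contains some $x_i$; hence $g\in x_iV\subseteq V_i$.
\end{itemize}
Lemma \ref{char OB} now gives $A\subseteq V_n$ for some $n$, which is the desired conclusion with $F=\{x_1,\ldots,x_n\}$ and $k=2^n$.

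For the converse, suppose the covering condition holds, and use criterion (2) of Lemma \ref{char OB}. Let $d$ be a continuous left-invariant pseudometric and put $V=\{g\mid d(g,1)<1\}$, which is an identity neighbourhood. By hypothesis, $A\subseteq (FV)^k$ for some finite $F$ and some $k\geqslant 1$. Let $\ell$ be the associated pseudolength and set $c=\max_{f\in F}\ell(f)$. Then every element of $FV$ has length less than $c+1$, so by subadditivity every element of $(FV)^k$ has length less than $k(c+1)$. Hence ${\sf diam}_d(A)\leqslant 2k(c+1)<\infty$. Since $d$ was arbitrary, $A$ is bounded.

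The main obstacle is the routine verification that the $V_n$ form an increasing exhaustive chain, which is where separability is used. This point is already essentially recorded in the text preceding the statement, so no step is expected to be difficult.
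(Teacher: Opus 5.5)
Your proof is correct and follows the paper's route: the forward direction applies Lemma~\ref{char OB}(4) to the chain $V_n=(\{x_1,\ldots,x_n\}V)^{2^n}$ from the remark preceding the statement, and the converse via criterion (2) is the routine estimate the paper leaves unstated. Your normalisation $x_1=1$ is what makes the chain increasing and exhaustive, a point the paper glosses over. The half-sentence about $v_0$ is superfluous and can be deleted, since $1\in V$ holds automatically.
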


We are now ready for the characterisation of the metrisability of $\ku E_L$. For this, let us say that a topological group is {\em locally bounded} if it has a bounded identity neighbourhood or, equivalently, every point has a bounded neighbourhood.

\begin{theorem}\label{locally (OB)}\cite{coarsebook}
The following  are equivalent for a Polish group $G$.
\begin{enumerate}
\item The left-coarse structure $\ku E_L$ is metrisable,
\item $G$ is covered by a countable family of bounded sets,
\item $G$ is locally bounded, 
\item $\ku E_L$ is induced by a compatible  left-invariant metric $d$ on $G$.
\end{enumerate}
\end{theorem}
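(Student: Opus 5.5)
I will prove the cycle of implications (4)$\Rightarrow$(1)$\Rightarrow$(2)$\Rightarrow$(3)$\Rightarrow$(4).

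\emph{(4)$\Rightarrow$(1).} This is immediate: the coarse structure $\ku E_d$ of a metric is, by definition, metrisable.

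\emph{(1)$\Rightarrow$(2).} Suppose $\ku E_L=\ku E_\partial$ for some metric $\partial$ on $G$. Fix $x\in G$. The balls $B_\partial(x,n)$, $n\in\N$, cover $G$. Each satisfies $B_\partial(x,n)\times B_\partial(x,n)\in\ku E_\partial=\ku E_L$, so each is bounded.

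\emph{(2)$\Rightarrow$(3).} Write $G=\bigcup_n A_n$ with each $A_n$ bounded. Since closures of bounded sets are bounded, we may take each $A_n$ closed. By the Baire category theorem for the Polish space $G$, some $\ov{A_n}$ has nonempty interior, containing an open set $gW$ with $W$ an identity neighbourhood. Then $W\subseteq g\inv A_n$, which is bounded because products of bounded sets are bounded. So $G$ is locally bounded.

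\emph{(3)$\Rightarrow$(4).} This is the main step.

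First I build a suitable chain. Let $V$ be a bounded symmetric open identity neighbourhood. Let $\{x_1,x_2,\ldots\}$ be dense with $x_1=1$. Set $V_n=(\{x_1^{\pm1},\ldots,x_n^{\pm1}\}V)^{3^n}$ for $n\geqslant 1$. These are symmetric, open, bounded, increasing, satisfy $V_n^3\subseteq V_{n+1}$, and exhaust $G$: for any $g$, $gV\ni x_k$ for some $k$, so $g\in x_kV\subseteq V_k$.

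Next I fix a compatible left-invariant metric $d_0$ from Theorem \ref{birkhoff-kakutani}, with length function $\ell_0$. Feeding the chain into Lemma \ref{lemma:birkhoff} (with $V_n=\{1\}$-free indexing for $n\leqslant 0$, or the sequence started at $n=1$) gives a continuous pseudolength $\ell_1$ with $\frac12 L\leqslant\ell_1\leqslant L$. I then put $\ell=\ell_0+\ell_1$, or $\max$ with $\min(\ell_0,1)$ to keep compatibility. The resulting $d$ is a compatible left-invariant metric. Since $\ell_0$ can be bounded-truncated, $\min(\ell_0,1)+\ell_1$ is coarsely equivalent to $\ell_1$.

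It remains to show $\ku E_L=\ku E_d$.
\begin{itemize}
\item[$\subseteq$:] Every set in $\ku E_L$ has finite $d$-diameter by (\ref{coarsepseudo2}), because $d$ is a continuous left-invariant pseudometric.
\item[$\supseteq$:] If $\sup_{(x,y)\in E}d(x,y)=r<\infty$, then $x\inv y\in V_m$ for all $(x,y)\in E$, where $2^{m}>2r$, using $L\leqslant 2\ell_1$. Since $V_m$ is bounded, for any continuous left-invariant pseudometric $\partial$ we get $\partial(x,y)=\partial(1,x\inv y)\leqslant {\sf diam}_\partial(V_m\cup\{1\})<\infty$. So $E\in\ku E_L$.
\end{itemize}

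The obstacle is ensuring that every $V_n$ is bounded, which relies on finite products of bounded sets being bounded, and arranging compatibility of $d$ with the topology. Adding the truncated compatible metric $\min(d_0,1)$ handles the latter without affecting the large-scale structure.
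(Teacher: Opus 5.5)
Your outline follows the paper's proof closely. It uses the same cycle of implications, the same Baire argument, the same exhausting chain built from a dense sequence and powers of a bounded neighbourhood, and the same argument that $d$-balls lie inside some bounded $V_m$. However, the construction of the metric in (3)$\Rightarrow$(4) has a genuine gap.

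Lemma \ref{lemma:birkhoff} needs a $\Z$-indexed chain of open identity neighbourhoods. Its continuity conclusion comes precisely from the terms with $n\to-\infty$, since $\ell\leqslant 2^n$ on the open set $V_n$. Your chain starts at $n=1$ and you never supply the negative part. If you run the Birkhoff construction on $V_1\subseteq V_2\subseteq\cdots$ alone, then $L\geqslant 2$ everywhere. Consequently either $\ell_1(1)\neq 0$, or, if you allow the empty product, $\ell_1\geqslant 1$ on $G\setminus\{1\}$. In both cases $\ell_1$ is not a continuous pseudolength on a non-discrete group. Adding $\min(\ell_0,1)$, or taking a max with it, cannot repair this. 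The functions $\ell_0+\ell_1$, $\min(\ell_0,1)+\ell_1$ and $\max(\ell_1,\min(\ell_0,1))$ are all still $\geqslant 1$ off the identity, so your $d$ is uniformly discrete and not compatible. The justification of your ``$\subseteq$'' step, which appeals to (\ref{coarsepseudo2}) for a continuous $d$, lapses for the same reason.

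The repair is cheap. Choose symmetric open identity neighbourhoods $\cdots\subseteq V_{-1}\subseteq V_0$ with $V_n^3\subseteq V_{n+1}$ for all $n\leqslant 0$, in particular $V_0^3\subseteq V_1$. Then $\ell_1$ is a genuine continuous pseudolength, and $\ell_0+\ell_1$ is a compatible length function. Its coarse structure is $\ku E_{d_0}\cap\ku E_{d_1}=\ku E_L$, so no truncation is needed. The paper is more economical: it takes the negative part of the chain to be a neighbourhood basis, so $\ell_1$ is already compatible by the last clause of Lemma \ref{lemma:birkhoff} and no auxiliary $\ell_0$ is required.

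A smaller point: $(F_nV)^{3^n}$ with $F_n=\{x_1^{\pm1},\ldots,x_n^{\pm1}\}$ is not symmetric in general, because its inverse is $(VF_n)^{3^n}$. For example, take the discrete free group on $a,b$ with $V=\{1,a^{\pm1}\}$, $x_1=1$ and $x_2=b$. There $(ab)^9$ lies in $(VF_2)^9$ but not in $(F_2V)^9=V_2$. Use $(F_nVF_n)^{3^n}$ as the paper does, or replace $V_n$ by $V_n\cap V_n\inv$, which preserves $V_n^3\subseteq V_{n+1}$ and exhaustiveness.
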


\begin{proof}
(1)$\saa$(2): Suppose that $\ku E_L$ is metrisable, that is, $\ku E_L=\ku E_d$ for some (possibly discontinuous and not necessarily left-invariant) metric on $G$. Then the sets 
$$
B_n=\{g\in G\del d(g,1)\leqslant n\}
$$
are $d$-bounded (and hence coarsely bounded) and cover $G$.

(2)$\saa$(3): Suppose $B_1,B_2,\ldots$ are bounded subsets of $G$ so that $G=\bigcup_nB_n$. Then $G=\bigcup_n\ov{B_n}$ is a countable covering of $G$ by bounded, closed subsets, so, by the Baire category theorem, some $\ov{B_n}$ must have non-empty interior. It follows that
$$
\big(\ov{B_n}\big)\inv\cdot  \ov{B_n}
$$ 
is a coarsely bounded identity neighbourhood and thus that $G$ is locally bounded.

(3)$\saa$(4): Assume that $G$ is locally bounded and pick a symmetric open bounded identity neighbourhood $V_0$. Expand this to a neighbourhood basis at the identity
$$
\ldots \subseteq V_{-2}\subseteq V_{-1}\subseteq V_0
$$
consisting of symmetric open sets so that $V_{n}^3\subseteq V_{n+1}$. Let also $\{x_1, x_2, \ldots\}$ be  a countable dense subset of  $G$ and set
$$
V_n=\big(\{x_1,\ldots, x_n\}^\pm\cdot V_0\cdot \{x_1,\ldots, x_n\}^\pm\big)^{3^n}
$$
for $n\geqslant 1$. Then all the $V_n$ are symmetric open bounded identity neighbourhoods and $V_{n}^3\subseteq V_{n+1}$ for all $n\in \Z$. Let $\ell$ be the length function obtained by applying Lemma \ref{lemma:birkhoff} and $d$ the corresponding compatible left-invariant metric on $G$, that is, 
$$
d(g,f)=\ell(g\inv f).
$$ 
By the definition of the left-coarse structure, we have $\ku E_L\subseteq \ku E_d$ and thus to see that $\ku E_L=\ku E_d$ it suffices to show that $\ku E_d\subseteq \ku E_L$. So suppose that $E\in \ku E_d$, i.e., that 
$$
\alpha=\sup_{(x,y)\in E}\ell(x\inv y)=\sup_{(x,y)\in E}d(x,y)<\infty.
$$ 
By construction, the $\ell$-ball, $B_\ell(\alpha)=\{z\in G\del \ell(z)\leqslant \alpha\}$ is contained in some $V_n$ and therefore bounded. So, if $\partial$ is a continuous left-invariant pseudometric on $G$, we have 
$$
\sup_{(x,y)\in E}\partial(x,y)\leqslant \sup_{\ell(x\inv y)\leqslant \alpha}\partial(1,x\inv y)= \sup_{z\in B_\ell(\alpha)}\partial(1,z)<\infty,
$$
whereby $E\in \ku E_\partial$. As $\partial$ is arbitrary, we conclude that $E\in \ku E_L$ and so $\ku E_d\subseteq \ku E_L$.

Finally, the implication (4)$\saa$(1) is immediate.
\end{proof}

\begin{exa}
For a simple example of a Polish group that is {\bf not} locally bounded and hence whose left coarse structure is non-metrisable, consider the infinite product $\prod_{n=1}^\infty \Z$. This has a neighbourhood basis at the identity consisting of the cylinder sets
$$
\prod_{n=1}^k\{0\}\times \prod_{n=k+1}^\infty \Z,
$$
that each have infinite diameter in the continuous invariant pseudometrics
$$
d(x,y)=|x_{k+1}-y_{k+1}|
$$
and thus fail to be bounded.
\end{exa}

It should be noted that the notion of coarse boundedness is not absolute but is instead highly dependent on the ambient group. For example, consider the semi-direct product 
$$
G=\Z\ltimes \prod_{n\in \Z} \Z.
$$
Here the open subgroup $W=\prod_{n\in \Z} \Z$ is bounded when viewed as a subgroup of $G$, but fails to be bounded when viewed as a subgroup of itself. To avoid confusion regarding this issue, we shall say that a topological group $H$ is {\em globally bounded} if it is bounded when viewed as a subgroup of itself, that is, if every continuous left-invariant pseudometric on $H$ is bounded.

\begin{definition}
A compatible left-invariant metric $d$ on a topological  group is said to be {\em coarsely proper} if $\ku E_L=\ku E_d$, i.e., if is a compatible metric for the coarse structure.
\end{definition}
One can show that $d$ is coarsely proper if and only if $d$-bounded sets are exactly the coarsely bounded sets. In other words, such $d$ give infinite diameter to every set that has infinite diameter in some continuous left-invariant pseudometric.


\section{Maximal and minimal metrics}
So far we have seen that every topological group $G$ can also be viewed as an object in the two categories
$$
{\sf Uniform}\qquad\qquad\qquad {\sf Coarse}
$$
To answer the question of whether $G$ can also be viewed in the more structured categories 
$$
{\sf LocalLipschitz}\qquad\qquad\qquad {\sf Quasimetric}
$$
note that, in that case, its uniform, respectively its coarse structure, must be metrisable. However, metrisability is not enough in itself to define a local Lipschitz or quasimetric structure as, for example, the two invariant metrics on $\R$,
$$
d_1(x,y)=|x-y|, \qquad d_4(x,y)=\sqrt{|x-y|},
$$
are both compatible and coarsely proper and thus induce the same uniform and coarse structures on $\R$. However, the two metric are neither locally Lipschitz equivalent nor quasi-isometric. Indeed, in order to isolate an inherently defined local Lipschitz structure on $G$ that is compatible with its uniform structure, we need to be able to canonically select a local bi-Lipschitz equivalence class among its compatible left-invariant metrics. A similar comment applies to quasimetric structure. For this purpose, we introduce the following classes of metrics.

\begin{definition}
A compatible left-invariant metric $d$ on a topological group $G$ is 
\begin{itemize}
\item {\em minimal} if, for every other compatible left-invariant metric $\partial$, the map
$$
(G,\partial) \overset{\sf id}\longrightarrow(G,d)
$$
is Lipschitz for short distances,
\item {\em maximal} if, for every other compatible left-invariant metric $\partial$, the map
$$
(G,d) \overset{\sf id}\longrightarrow(G,\partial)
$$
is Lipschitz for large distances.
\end{itemize}
\end{definition}

Because all minimal metrics will necessarily be locally bi-Lipschitz equivalent, they all induce the same local Lipschitz structure on $G$, which we take to be the {\em local Lipschitz structure} of $G$. Similarly, all maximal metrics are quasi-isometric and thus define the {\em quasimetric structure} of $G$.

Reiterating the discussion above, every topological group $G$ defines an object in the categories {\sf Uniform} and {\sf Coarse} and, when furthermore admitting respectively minimal and maximal metrics, it also defines objects in the categories {\sf LocalLipschitz} and {\sf Quasimetric}. Furthermore, if $\ku E_G$ and $\ku Q_G$ are the coarse and quasimetric structures thus obtained, then $\ku E_G$ is the result of applying the forgetful functor of Figure \ref{second diagram} to $\ku Q_G$. Similarly for the local Lipschitz and uniform structure.

\begin{figure}
\begin{tikzcd}
&&&&{\sf Uniform}\arrow[squiggly]{d}{} & &{\sf Coarse}\arrow[squiggly]{d}{}\\
&&&&{\sf Local Lipschitz} &  & {\sf Quasimetric}\\
\end{tikzcd}
\caption{Reconstruction of small scale and large scale geometric structure}
\label{third diagram}
\end{figure}

The definitions of minimal and maximal metrics are primarily conceptual, which  unfortunately makes them rather difficult to verify and work with. We thus aim for intrinsic formulations of minimality and maximality that do not involve comparisons with the class of all compatible left-invariant metrics. For minimality, this is covered by the following result.

\begin{theorem}\label{thm:minimal}\cite{minimal}
The following are equivalent for a compatible left-invariant metric $d$ on a topological group $G$ with associated length function $\ell$.
\begin{enumerate}
\item $d$ is minimal,
\item for some identity neighbourhood $U$, constant $\eps>0$ and all $g\in G$ and $n\geqslant 1$,
$$
g, g^2,g^3, \ldots, g^n\in  U\;\;\saa\;\; \eps n\cdot \ell(g)\leqslant \ell(g^n)\leqslant n\cdot \ell(g).
$$
\end{enumerate}
\end{theorem}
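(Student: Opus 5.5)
The implication (2)$\saa$(1) is a direct scaling argument. The converse (1)$\saa$(2) I would prove by contraposition, building a new compatible left-invariant metric out of a sequence of counterexamples to (2).

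\emph{(2)$\saa$(1).} The upper bound $\ell(g^n)\leqslant n\cdot\ell(g)$ always holds by subadditivity, so only the lower bound carries information. Let $\partial$ be any compatible left-invariant metric with length function $\lambda$, and fix $U,\eps$ as in (2). By compatibility of both metrics, choose $\delta>0$ such that $B_\lambda(\delta)\subseteq U\cap B_\ell(1)$. Take $g\neq 1$ with $\lambda(g)\leqslant \delta/2$ and set $n=\lfloor \delta/\lambda(g)\rfloor\geqslant 1$, so that $n\geqslant \delta/(2\lambda(g))$.
\begin{itemize}
\item Subadditivity gives $\lambda(g^i)\leqslant i\lambda(g)\leqslant\delta$ for $i\leqslant n$, so $g,\ldots,g^n\in B_\lambda(\delta)\subseteq U$.
\item Hence (2) applies and gives $\eps n\cdot\ell(g)\leqslant \ell(g^n)\leqslant 1$, since $g^n\in B_\ell(1)$.
\item Therefore $\ell(g)\leqslant 1/(\eps n)\leqslant \tfrac{2}{\eps\delta}\,\lambda(g)$.
\end{itemize}
This says $(G,\partial)\maps{\sf id}(G,d)$ is Lipschitz for short distances, by left-invariance. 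As $\partial$ was arbitrary, $d$ is minimal.

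\emph{(1)$\saa$(2), by contraposition.} Suppose (2) fails. Fix a basis $U_1\supseteq U_2\supseteq\cdots$ at the identity with $U_k\subseteq B_\ell(2^{-k})$. Failure of (2) for $U=U_k$ and $\eps=1/k$ gives $g_k$ and $n_k$ with $g_k,\ldots,g_k^{n_k}\in U_k$ and $\ell(g_k^{n_k})<\tfrac1k n_k\ell(g_k)$. The goal is a compatible left-invariant $\partial$ with $\partial(g_k)\lesssim \ell(g_k^{n_k})/n_k$. Then $\ell(g_k)/\partial(g_k)\geqslant k\to\infty$ while $\partial(g_k)\to 0$, so ${\sf id}\colon(G,\partial)\to(G,d)$ is not Lipschitz for short distances, contradicting minimality.

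To build $\partial$, I would first refine the choice of the $g_k$ so that the sequence of powers behaves "linearly". Pass to the most efficient exponent: among $1\leqslant m\leqslant n_k$ choose $m$ roughly minimising $\ell(g_k^m)/m$ at the appropriate scale, and replace $g_k$ and $n_k$ accordingly, so that $c_k:=\ell(g_k^{n_k})/n_k$ satisfies $\ell(g_k^{m})\gtrsim m c_k$ up to a bounded factor. Then define the word length
$$
\lambda(h)=\inf\Big\{\sum_i \mathrm{cost}(h_i)\Del h=h_1\cdots h_r\Big\},
$$
where $\mathrm{cost}(g_k^{\pm m})=m c_k$ for $m\leqslant n_k$ and $\mathrm{cost}(x)=\ell(x)$ otherwise.

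This $\lambda$ is a pseudolength with $\lambda\leqslant\ell$, so it is continuous. The main obstacle is the reverse estimate: showing $\lambda(h)\to0$ forces $\ell(h)\to0$, so that $\lambda$ is a genuine compatible length function. The danger is that long words of cheap pieces $g_k^m$ could reach elements with large $\ell$.

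I would handle this with a Birkhoff/Kakutani-type argument. Set $V_{-j}$ to be the $\ell$-ball of radius $2^{-j}$ together with the sets $\{g_k^m: m\leqslant n_k,\; mc_k\leqslant 2^{-j}\}$ for large $k$. Using $g_k^m\in U_k$ and the linear lower bound from the refinement, I would check that these sets still shrink to the identity. Then, as in the proof of Lemma~\ref{lemma:birkhoff} but with Kakutani's sharper condition $V^2\subseteq V_{+1}$, products of words of small total cost should stay in a small $V$. If this fails as stated, the fallback is to prune the sequence $(g_k)$, keeping only a sparse subsequence, so that the different cyclic blocks do not interact.
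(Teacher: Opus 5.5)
\textbf{Verdict: (2)$\Rightarrow$(1) is correct, but (1)$\Rightarrow$(2) has a genuine gap.} Your (2)$\Rightarrow$(1) argument is the standard scaling argument. For (1)$\Rightarrow$(2), everything rests on $\lambda$ being a \emph{compatible} length function, and that is exactly the step you leave open. The paper does not prove this theorem; it cites it and only remarks that Kakutani's metrisation under $V_n^2\subseteq V_{n+1}$ is vital for the proof.

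The route you sketch also fails as stated. The sets $V_{-j}=B_\ell(2^{-j})\cup\{g_k^m : mc_k\leqslant 2^{-j}\}$ do not satisfy $V_{-j-1}^2\subseteq V_{-j}$. To make $g_k$ cheap, it must enter at a level with $2^{-j-1}\approx c_k<\ell(g_k)/k$. But then, for generic $x\in B_\ell(2^{-j-1})$, the element $xg_k$ is not a power of $g_k$, while $\ell(xg_k)\geqslant \ell(g_k)-\ell(x)>2^{-j}$, so $xg_k\notin V_{-j}$. Restricting to ``large $k$'' does not help: either the added powers already lie in $B_\ell(2^{-j})$, so $g_k$ is not made cheap, or they do not, and such products escape. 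Closing the sets under products just brings back the compatibility question. Separately, your normalisation gives $c_k=0$, hence $\lambda(g_k)=0$, whenever $g_k^m=1$ for some $m\leqslant n_k$, which failure of (2) permits.

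The real obstruction is non-commutativity. Thinning out the sequence does not address it, because it already arises inside a single cyclic block. A cheap word $g_k^{m_1}x_1g_k^{m_2}x_2\cdots$ with $\ell$-small letters $x_i$ can only be compared to one power of $g_k$ via
\[
g^{a}xg^{b}=g^{a+b}\,\big(g^{-b}xg^{b}\big).
\]
Right translations are not $d$-isometries, so nothing bounds $\ell(g_k^{-b}xg_k^{b})$ linearly in $\ell(x)$. Pricing every letter $x$ at $\ell(x)$ would require a Lipschitz control of conjugation by powers of $g_k$, which general groups lack.

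The missing idea is to build the Kakutani chain inductively, block by block, with the non-cyclic part \emph{not} an $\ell$-ball.
\begin{itemize}
\item For the $k$-th block, take $V_j=R_jP_jR_j$ with $P_j=\{g_k^t : |t|\leqslant N_j\}$ and $N_{j+1}=\lfloor N_j/2\rfloor$.
\item Choose the symmetric open sets $R_j$, by continuity of the finitely many maps $x\mapsto g_k^{-b}xg_k^{b}$, so small that $g_k^{-b}R_{j+1}^2g_k^{b}R_{j+1}\subseteq R_j$ for all $|b|\leqslant N_{j+1}$.
\item The displayed identity then gives $V_{j+1}^2\subseteq V_j$.
\end{itemize}
This is where squares rather than cubes are essential: $P_{j+1}^2\subseteq P_j$ matches the halving of the weights. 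So if the block starts at level $j_0$, one gets $\partial(g_k)$ of order $2^{-j_0}/N_{j_0}$; with cubes, only $N_{j_0}^{\log_3 2}$ would appear.

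Two further constraints remain. The top of each block must fit into the previously built set, which determines $N_{j_0}$ via $\ell(g_k^{qn_k+r})\leqslant q\,\ell(g_k^{n_k})+\ell(g_k^{r})$. And $g_k$ and $\eps_k$ must be chosen only after the earlier levels are fixed, with $\eps_k$ small enough to absorb the loss of scale from the earlier blocks. Only then does $\ell(g_k)/\partial(g_k)\to\infty$ follow while $\partial$ stays compatible. None of this is in your proposal.
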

In other words, the minimality of the metric can be expressed in terms of a linear growth condition of the associated length function in some identity neighbourhood of the group. For example, because in a Banach space $\norm{n\cdot x}=n\cdot\norm x$, we find that the norm metric  is a minimal metric on the associated additive topological group.

Of particular importance is the fact that a variant of Condition (2) underlies A. M. Gleason's work on Hilbert's 5th problem. Namely, the locally compact Polish groups\footnote{A locally compact group is Polish if and only if it is second countable.} admitting a metric satisfying (2) are exactly the Lie groups \cites{minimal,tao}. Combining this with the solution to Hilbert's 5th problem by Gleason, Montgomery, Zippin and Yamabe \cite{tao}, we find that the following conditions are equivalent for all locally compact Polish groups $G$.
\begin{enumerate}
\item[(a)] $G$ is a Lie group, i.e., admits a Lie group structure, 
\item[(b)] $G$ is locally euclidean, i.e., is locally homeomorphic to some $\R^n$, 
\item[(c)] $G$ is {\em NSS}, i.e., some identity neighbourhood contains no non-trivial subgroups,
\item[(d)] $G$ has a minimal metric.
\end{enumerate}

We similarly have an internal characterisation of maximality.

\begin{theorem}\cite{coarsebook}
The following are equivalent for a compatible  left-invariant metric $d$ on a topological group $G$,
\begin{enumerate}
\item $d$ is maximal,
\item $d$ is coarsely proper and {\em large scale geodesic}, that is, for some constant $K$ and all $x,y\in G$, there are $z_0=x, z_1,\ldots, z_n=y$ so that $d(z_{i-1}, z_i)\leqslant K$ and 
$$
\sum_{i=1}^nd(z_{i-1}, z_i)\leqslant K\cdot d(x,y),
$$
\item $d$ is quasi-isometric to the word metric $\rho_B$,
$$
\rho_B(x,y)=\inf\big(k\del y=xz_1\cdots z_k\text{ for some }z_1,\ldots, z_k\in B^\pm\big),
$$ 
given by a bounded generating set $B\subseteq G$.
\end{enumerate}
\end{theorem}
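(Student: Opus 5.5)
I will prove the cycle (1)$\saa$(2)$\saa$(3)$\saa$(1). Two tools drive it. The first is Theorem \ref{locally (OB)} together with Lemma \ref{lemma:birkhoff}, which provide coarsely proper metrics built from chains of bounded neighbourhoods. The second is that, by (\ref{coarsepseudo2}), every continuous left-invariant pseudometric is bounded on every coarsely bounded set.

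\emph{(1)$\saa$(2).} First I show that a maximal $d$ is coarsely proper. Let $A$ be coarsely bounded and suppose ${\sf diam}_d(A)=\infty$. Then the identity map $(G,d)\to(G,d)$ cannot be compared with a metric in which $A$ stays bounded while it is unbounded in $d$; the awkward point is that $d$ is given, so I need a compatible left-invariant $\partial$ with $d$ not Lipschitz for large distances into $\partial$. Rather than work with $A$ directly, I use the fact that $d$ is maximal, so $\ku E_d\subseteq\ku E_\partial$ for every compatible $\partial$. Since every continuous left-invariant pseudometric $\partial'$ is dominated by a compatible metric ($\partial=\partial'+d'$ for a fixed compatible left-invariant metric $d'$), this gives $\ku E_d\subseteq\ku E_L$. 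The reverse inclusion holds by the definition of $\ku E_L$, so $d$ is coarsely proper.

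Next, for large scale geodesicity, let $B$ be the closed $d$-ball of radius $1$. It is open-ish and bounded, and it generates $G$, because the union $\bigcup_n B^n$ is an open subgroup $H$. Here I worry that $H\neq G$. I will handle this by the comparison with $\partial$: set $\partial=\min(d,1)+\rho$, where $\rho$ is a metric that is large off $H$, namely the word-metric-type quantity $\rho_B$ truncated to $\min(\rho_B,\cdot)$ and combined with $d$. Cleanly, I define
$$
\partial(x,y)=\inf\Big\{\sum_i d(z_{i-1},z_i)\Del z_0=x,\ldots,z_n=y,\ d(z_{i-1},z_i)\leqslant 1\Big\},
$$
with the convention $\partial(x,y)=d(x,y)+\text{large}$ between different $H$-cosets; a cleaner choice there is $\max$ with a discrete coset metric. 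This $\partial$ is left-invariant, and it is compatible because it agrees with $d$ on small scales. Maximality then gives $\partial\leqslant Kd+K$. Unwinding this yields chains with steps $\leqslant1$ and total length $\leqslant Kd+K$. The additive $K$ is absorbed by enlarging the step bound: if $d(x,y)\leqslant 1$, use a single step. Hence $d$ is large scale geodesic with constant about $2K+1$.

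\emph{(2)$\saa$(3).} With $K$ as in (2), put $B=\{g\del d(g,1)\leqslant K\}$. It is bounded by coarse properness, and it generates $G$ by large scale geodesicity. The bound $\rho_B\leqslant$ (number of steps) will come from chains with $\sum d(z_{i-1},z_i)\leqslant Kd(x,y)$ and steps $\leqslant K$. I must control the number of steps, not just their total length; to do so I merge consecutive steps greedily into blocks of $d$-length between $K$ and $2K$, so that each block is a product of at most two elements of $B$. This gives $\rho_B\leqslant 2d+2$. Conversely, $d\leqslant K\rho_B$ holds trivially.

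\emph{(3)$\saa$(1).} Let $\partial$ be any compatible left-invariant metric. Because $B$ is coarsely bounded, $c={\sf diam}_\partial(B\cup\{1\})<\infty$, so $\partial\leqslant c\,\rho_B$ by the triangle inequality along words. Combined with $\rho_B\leqslant Kd+K$, this gives that ${\sf id}\colon(G,d)\to(G,\partial)$ is Lipschitz for large distances.

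The main obstacle is the construction of $\partial$ in (1)$\saa$(2): I must ensure that it is a genuine compatible left-invariant metric, including when the generated subgroup $H$ is proper, where I add a coset-discrete term $\max(\cdot, N\cdot 1_{xH\ne yH})$ and let $N\to\infty$ to reach a contradiction with the fixed constant $K$.
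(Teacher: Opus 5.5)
\textbf{The gap is in the large scale geodesic half of (1)$\saa$(2).} Your arguments for (2)$\saa$(3), for (3)$\saa$(1), and for coarse properness in (1)$\saa$(2) are sound. One detail in (2)$\saa$(3): take each greedy block to be a maximal run of steps of total $d$-length $\leqslant K$ followed by one further step. That is what makes each block a product of two elements of $B$ while having length $>K$.

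In (1)$\saa$(2) you fix the closed unit ball $B$ and try to show that the open subgroup $H=\bigcup_nB^n$ is all of $G$. This is false for maximal metrics in general. On the discrete group $\Z$, the metric $d(x,y)=2|x-y|$ is maximal, since every compatible left-invariant $\partial$ satisfies $\partial\leqslant\frac12\partial(0,1)\,d$. Yet its closed unit ball is $\{0\}$.

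So no contradiction can be extracted from $H\neq G$, and your step ``let $N\to\infty$'' fails for a concrete reason. The constant supplied by maximality depends on the metric it is applied to, so it is not fixed as $N$ varies. Indeed, if $xH\neq yH$ then $x\inv y\notin B$, so $d(x,y)>1$ and $N\cdot\mathbf{1}_{xH\neq yH}\leqslant N\,d(x,y)$. A coset term of this kind is therefore consistent with maximality for every $N$. Separately, when $H\neq G$ your path metric is infinite across cosets, and the patches you suggest are not checked to satisfy the triangle inequality.

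\textbf{The missing idea} is to let the radius vary and to defeat maximality with one single metric that penalises the whole chain of subgroups at once.
\begin{itemize}
\item Let $H_n$ be the open subgroup generated by the open $d$-ball of radius $n$, so $G=\bigcup_nH_n$, and let $m(g)=\min\{n\del g\in H_n\}$.
\item Suppose no $H_n$ equals $G$. Then $m$ takes infinitely many values. For each such value $k\geqslant 2$ pick $g_k$ with $m(g_k)=k$, and choose a non-decreasing $f\colon\N\to\R_{\geqslant 0}$ with $f(1)=0$ and $f(k)\geqslant k\,(d(g_k,1)+1)$.
\item We have $m(xy)\leqslant\max\{m(x),m(y)\}$ and $m(x\inv)=m(x)$, and $m$ is constant on double cosets of the open subgroup $H_1$. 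Hence $\partial(x,y)=d(x,y)+f(m(x\inv y))$ is a compatible left-invariant metric.
\item Then $\partial(1,g_k)\geqslant (k+1)\,d(1,g_k)+k$, which rules out $\partial\leqslant Kd+K$ for every $K$ and contradicts maximality.
\end{itemize}
Hence some open $d$-ball of radius $\alpha$ generates $G$. Now run your path metric with steps of $d$-length $<\alpha$ in place of $\leqslant 1$. It is a genuine compatible left-invariant metric, since it agrees with $d$ at distances $<\alpha$. Maximality then yields the large scale geodesic chains exactly as you describe.
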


From condition (2), one easily gets that every outright geodesic metric is maximal and hence that  the norm induces the quasimetric structure of the additive group $(X,+)$ of a Banach space. Since the norm metric is also minimal, we see that both the local Lipschitz and quasimetric structures on $(X,+)$ are what they should be, namely, those given by the norm.

One may also use condition (3) to give a simple criterion for when, e.g., Polish groups have maximal metrics and hence canonical quasimetric structure. But first a word of caution.  Even for a Polish group, it is not true that the word metric $\rho_B$ of every bounded generating set $B\subseteq G$ will induce the quasimetric structure. But, if $B$ is either closed or if $\rho_B$ is known to be quasimetric to a compatible metric on $G$, then it does. 
\begin{theorem}\label{thm:char of max}\cite{coarsebook}
A Polish group $G$ admits a maximal metric and thus a quasimetric structure if and only if $G$ is algebraically generated by a bounded subset $B\subseteq G$. Moreover, in this case, the word metric $\rho_{\ov B}$ associated to the topological closure $\ov B$ induces the quasimetric structure.
\end{theorem}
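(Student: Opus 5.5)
We need to prove Theorem \ref{thm:char of max}. For the forward direction, suppose $G$ has a maximal metric $d$. By the internal characterisation of maximality, $d$ is quasi-isometric to a word metric $\rho_B$ for some bounded generating set $B$, so $G$ is generated by a bounded set. (Alternatively, directly from condition (2): $d$ is coarsely proper and large scale geodesic with constant $K$. Then the closed ball $B=\{g\del d(g,1)\leqslant K\}$ is $d$-bounded, hence coarsely bounded. Every $y$ is joined to $1$ by a $K$-chain, so $y\in B^n$ for some $n$, and $B$ generates.)

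For the converse, let $B$ be a bounded generating set. Replace it by $\ov B$, which is still bounded, and by the remark after Lemma \ref{char OB} we may also take it symmetric. The main task is to show that $\rho_{\ov B}$ is quasi-isometric to a compatible left-invariant metric. The implication (3)$\saa$(1) of the maximality characterisation then gives a maximal metric inducing the quasimetric structure. Since $G=\bigcup_n \ov B^n$ is a countable union of closed sets, the Baire category theorem gives some $\ov B^n$ with nonempty interior. Hence $\ov B^{2n}$ contains a symmetric open identity neighbourhood $W$, and $W$ is bounded. So $G$ is locally bounded, and Theorem \ref{locally (OB)} supplies a compatible coarsely proper left-invariant metric $d$.

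It remains to compare $d$ with $\rho=\rho_{\ov B}$. First, $d\leqslant K\rho$ with $K=\sup_{b\in\ov B}d(b,1)<\infty$, by the triangle inequality and left-invariance, since $\ov B$ is bounded. For the reverse inequality, I would build a genuinely geodesic-like compatible metric instead of using $d$ directly. Apply Lemma \ref{lemma:birkhoff} to a chain with $V_0=W$ and a neighbourhood basis below it satisfying $V_n^3\subseteq V_{n+1}$. Above $V_0$, set $V_n=(\ov B^{2n}\cup W)^{3^n}$, or more simply $V_n=W\ov B^{m_n}W$ with $m_n$ growing. The goal is that $\rho$-balls and the Birkhoff balls are mutually nested with linear control. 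This yields a compatible metric $\partial$ whose restriction to large distances is comparable with the word length in the symmetric generating set $W\cup\ov B$. The word lengths $\rho_{\ov B}$ and $\rho_{W\cup \ov B}$ are bi-Lipschitz, since $W\subseteq\ov B^{2n}$.

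The cleanest route, which I would try first, is to verify that $\partial$ is coarsely proper and large scale geodesic. Coarse properness holds because its balls lie in some $V_n$, which is bounded. For large scale geodesicity, the infimum over factorisations $g=h_1\cdots h_k$ defining $\ell$ can be refined. Each factor $h_i\in V_n$ is a product of at most about $3^n$ elements of $W\cup \ov B$, each of $\ell$-length at most $L\leqslant 2^{0}$ up to a constant, while $L(h_i)\approx 2^n$. The mismatch between $3^n$ and $2^n$ is the main obstacle. I would fix this by choosing the chain so that $V_{n}=(W\cup\ov B)^{c\cdot 2^{n}}$ for $n\geqslant1$, which gives $V_n^3\subseteq V_{n+2}$, and reindexing. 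Then a factor of $L$-length $2^n$ splits into $O(2^n)$ steps of bounded length, which gives condition (2) with a uniform $K$. Hence $\partial$ is maximal and quasi-isometric to $\rho_{W\cup\ov B}\sim\rho_{\ov B}$, proving both statements.
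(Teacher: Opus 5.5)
Your forward direction is fine, and your plan for the converse is the right one: find an open identity neighbourhood inside a power of $\ov B$, build a compatible left-invariant metric quasi-isometric to the word metric, and apply (3)$\saa$(1). However, two steps fail as written.

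\textbf{The Baire category step.} $\ov B^n$ is a product of closed sets, and such products need not be closed when $\ov B$ is not compact. For instance, in $\ell^2$ the set $\ov B=\{\pm e_n,\ \pm(e_n+(\frac12+\frac1n)e_1)\del n\geqslant 2\}$ is closed, symmetric and bounded, yet $\frac12e_1\in\overline{\ov B+\ov B}\setminus(\ov B+\ov B)$. So $G=\bigcup_n\ov B^n$ is not a countable union of closed sets. Applying Baire to the closures $\overline{\ov B^n}$ only gives an identity neighbourhood inside $\overline{\ov B^{2n}}$. That yields local boundedness, and even a maximal metric (using $U\ov BU$ for a bounded open identity neighbourhood $U$ together with the construction below). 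It does not give $W\subseteq\ov B^{2n}$, which is exactly what your comparison of $\rho_{\ov B}$ with $\rho_{W\cup\ov B}$, i.e.\ the ``moreover'' clause, requires. The missing idea is Pettis's theorem. Each $\ov B^n$ is analytic, being the image of the closed set $\ov B\times\cdots\times\ov B\subseteq G^n$ under multiplication, so it has the Baire property. Since the $\ov B^n$ cover the Baire space $G$, some $\ov B^n$ is non-meagre, and then $(\ov B^n)\inv\ov B^n=\ov B^{2n}$ contains an open identity neighbourhood. This is precisely where closedness of $\ov B$ is used, in line with the paper's warning that $\rho_B$ can fail for non-closed $B$.

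\textbf{The metric construction.} Your chain $V_n=(W\cup\ov B)^{c2^n}$ satisfies only $V_n^3\subseteq V_{n+2}$. So Lemma~\ref{lemma:birkhoff} does not apply, since its induction genuinely uses $V_m^3\subseteq V_{m+1}$. Reindexing to $V_{2k}=(W\cup\ov B)^{c4^k}$ restores the hypothesis, but only by reinstating the mismatch in worse form. An element of $L$-length $2^k$ may need about $c4^k$ generators, so $\ell$ becomes comparable to the square root of the word length. For $G=\Z$ and $B=\{-1,0,1\}$ this gives, up to constants, the non-maximal metric $\sqrt{|x-y|}$. Without reindexing, the lower bound of $\ell$ in terms of $L$, on which your coarse-properness argument rests, would have to be proved by hand.

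A clean repair is the construction from the proof of the proposition in the section on global Lipschitz structure.
\begin{itemize}
\item The set $V=W\ov BW$ is open, symmetric and bounded, with $\ov B\subseteq V\subseteq\ov B^{4n+1}$. Hence $\rho_V$ and $\rho_{\ov B}$ are bi-Lipschitz equivalent.
\item For any compatible left-invariant metric $d$, put $\partial(x,y)=\inf\{\sum_{i=1}^k d(v_i,1)\del x\inv y=v_1\cdots v_k,\ v_i\in V\}$.
\item Then $d\leqslant\partial$, with equality when $x\inv y\in V$, so $\partial$ is compatible.
\item Also $\partial\leqslant(\sup_{v\in V}d(v,1))\cdot\rho_V$, and the supremum is finite because $V$ is bounded.
\item If $\{g\del d(g,1)<\eps\}\subseteq V$, group the factors of a near-optimal decomposition greedily into blocks of $d$-length $<\eps$. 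This gives $\rho_V\leqslant\frac2\eps\partial+\frac2\eps+1$.
\end{itemize}
Hence $\partial$ is quasi-isometric to $\rho_{\ov B}$, and (3)$\saa$(1) finishes the proof. Kakutani's sharper hypothesis $V_n^2\subseteq V_{n+1}$, with $V_n=V^{2^n}$, would also avoid the mismatch.
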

Because the Polish groups admitting maximal metrics are exactly the ones whose left coarse structure is generated by a single entourage, these groups are called {\em monogenic}.


\section{Global Lipschitz structure}
Having responded to the problem of how to define local Lipschitz and quasimetric structure on topological groups, we are now left with the question of when these two can be promoted to a Lipschitz structure on the group. The simple answer is {\em always}.
\begin{proposition}\cite{minimal}
Suppose a topological group $G$ admits both a minimal  metric $d$ and a maximal  metric $D$. Then $G$ has one that is simultaneously minimal and maximal. Moreover, any two such metrics will be bi-Lipschitz equivalent.
\end{proposition}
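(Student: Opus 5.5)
The natural candidate is to glue the small scale behaviour of $d$ with the large scale behaviour of $D$ at the level of length functions. Let $\ell_d$ and $\ell_D$ be the length functions of $d$ and $D$. I would set
$$
\ell(g)=\inf\Big\{\sum_{i=1}^k \min\{\ell_d(h_i),1\}+\sum_{i=1}^k\ldots\Big\}
$$
but a cleaner choice is simply
$$
\ell(g)=\max\big\{\min\{\ell_d(g),1\},\;\ell_D(g)\big\}\quad\text{or}\quad \ell(g)=\min\{\ell_d(g),1\}+\ell_D(g).
$$
I would take the sum. The truncation $\min\{\ell_d,1\}$ is again a continuous pseudolength function, and a sum of continuous pseudolength functions is one too. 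The result is a length function, since $\ell_D$ already is. The associated metric $\partial(g,f)=\ell(g\inv f)$ is left-invariant. It is compatible because it dominates $\ell_D$ and is continuous. Indeed, $\partial\geqslant D$ gives that $\partial$-balls shrink into $D$-neighbourhoods, and continuity of $\ell$ gives the converse.

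Next I would check that $\partial$ is maximal. The identity $(G,\partial)\to(G,D)$ is $1$-Lipschitz, and $\partial\leqslant D+1$. So $\partial$ and $D$ are quasi-isometric. Maximality transfers, because for any compatible left-invariant $\rho$ the composition $(G,\partial)\to(G,D)\to(G,\rho)$ is Lipschitz for large distances.

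Then I would check that $\partial$ is minimal. Given any compatible left-invariant $\rho$, the map $(G,\rho)\to(G,d)$ is Lipschitz for short distances by minimality of $d$. The map $(G,\rho)\to(G,D)$ is uniformly continuous, as both metrics induce the left uniformity by Theorem \ref{birkhoff-kakutani}. This alone does not give a Lipschitz bound on $D$. The main obstacle is therefore controlling $\ell_D$ at small scale by $\ell_d$, i.e. showing that $D$ itself can be chosen Lipschitz for short distances relative to $d$.

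To handle this, I would first replace $D$ by $D'=\max\{D,d\}$. This is compatible and left-invariant. Since $D'\geqslant D$, the identity $(G,D')\to(G,D)$ is Lipschitz, and $D'$ is still maximal: maximality of $D$ gives $d\leqslant KD+K$, so $D'\leqslant KD+K$ and $D'$ is quasi-isometric to $D$. Now apply the construction with the roles adjusted. Put $\partial=\min\{d,1\}$ on small scale and compare directly. By the maximality of $D$, $\min\{\ldots\}$ suffices, and the cleanest final candidate is the word-type length
$$
\ell(g)=\inf\Big\{\sum_{i=1}^k\ell_d(h_i)\Del g=h_1\cdots h_k,\ \ell_d(h_i)\leqslant 1\Big\}\cup\{\ldots\}.
$$
More concretely, I would take $\ell(g)=\inf\{\sum_i \ell_d(h_i)\,:\, g=h_1\cdots h_k,\ h_i\in U\}$ for a bounded identity neighbourhood $U\subseteq\{\ell_d\leqslant 1\}$, assuming $G$ is generated by a bounded set. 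In that case one replaces $U$ by $U\cup B$ with $B$ a bounded generating set, giving $B$-letters cost $1$. Near the identity $\ell=\ell_d$ up to constants, since the $1$-step decomposition shows $\ell\leqslant\ell_d$, and short products stay controlled. Globally $\ell$ is quasi-isometric to the word metric $\rho_{B\cup U}$, hence maximal by condition (3) of the maximality characterisation. The point I expect to require the most care is verifying $\ell\geqslant c\,\ell_d$ near $1$. I would try to obtain this from the linear growth condition in Theorem \ref{thm:minimal} applied to $d$, restricting the decompositions to a small neighbourhood.

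Finally, uniqueness is easy. If $\partial_1,\partial_2$ are both minimal and maximal, the identity between them is Lipschitz for both short and large distances in each direction. By Lemma \ref{lipschitz} it is then Lipschitz each way, so the two metrics are bi-Lipschitz equivalent.
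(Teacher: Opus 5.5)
Your final construction is, up to an immaterial change, the paper's own. The paper takes a $D$-ball $V$ containing a bounded generating set $B$ and lets $\partial(x,y)$ be the infimum of $\sum_i d(v_i,1)$ over factorisations $y\inv x=v_1\cdots v_n$ with $v_i\in V$. You instead use letters from $U\cup B$ and charge letters outside $U$ a cost of $1$. Your first two attempts, $\min\{\ell_d,1\}+\ell_D$ and replacing $D$ by $\max\{D,d\}$, contribute nothing toward minimality and should be discarded. Also, rather than ``assuming'' bounded generation, derive it from $D$. A maximal metric is coarsely proper and quasi-isometric to some $\rho_B$ with $B$ a bounded generating set (condition (3) of the characterisation of maximal metrics). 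Its balls are bounded identity neighbourhoods, which supplies your $U$.

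Where you go wrong is in locating the difficulty. The bound you flag as needing the most care, $\ell\geqslant c\,\ell_d$ near $1$, is immediate and needs nothing from Theorem \ref{thm:minimal}. Since $\ell_d(h_1\cdots h_k)\leqslant\sum_i\ell_d(h_i)$, a factorisation using only $U$-letters costs at least $\ell_d(g)$, and one using a letter outside $U$ costs at least $1$. So $\ell\geqslant\min\{1,\ell_d\}$ everywhere, and, as $U\subseteq\{\ell_d\leqslant 1\}$, $\ell=\ell_d$ on $U$. This one observation shows that $\ell$ is a length function, that $\partial$ is compatible, and that $\partial$ coincides with $d$ at small scales, hence is minimal.

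The step that genuinely needs an argument is the one you merely assert, $\rho_{U\cup B}\leqslant K\ell+K$ (the paper leaves it implicit too). The difficulty is that a cheap factorisation may contain arbitrarily many $U$-letters of tiny cost. Fix $\eps\in(0,1]$ with $\{\ell_d<\eps\}\subseteq U$. Cut each maximal run of consecutive $U$-letters greedily into blocks that either are a single letter or have total $\ell_d$-cost $<\eps$. Each block multiplies into $U$, and any two consecutive blocks together cost at least $\eps$. The number of runs exceeds the number of letters outside $U$ by at most one. Together these give $\rho_{U\cup B}\leqslant\tfrac{2}{\eps}\,\ell+1$. With this in place, maximality follows from condition (3) as you say, and your uniqueness argument is exactly the paper's.
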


\begin{proof}
Suppose first that $\partial_1$ and $\partial_2$ are both simultaneously minimal and maximal. Then, since $\partial_1$ is maximal, the map
$$
(G,\partial_1)\overset{\sf id}\longrightarrow (G,\partial_2)
$$
is Lipschitz for large distances and, since $\partial_2$ is minimal, it is also Lipschitz for short distances. It therefore follows that the map is Lipschitz. By symmetry, we see that the two metrics are Lipschitz equivalent.

To construct a simultaneously maximal and minimal metric $\partial$ from $d$ and $D$, we observe first that, since $D$ is maximal, by Theorem \ref{thm:char of max}, $G$ must be generated by a bounded set $B\subseteq G$. Let then $r>0$ be large enough so that $B$ is contained in the open $D$-ball $V$ of radius $r$ centred at the identity. Then $D$ is quasi-isometric with $\rho_V$ and the formula
$$
\partial(x,y)=\inf\Big(\sum_{i=1}^nd(v_i,1)\del x=yv_1\cdots v_n \;\&\; v_i\in V\Big)
$$
defines a compatible left-invariant metric on $G$ that is quasi-isometric to $\rho_V$ and hence also to $D$. Moreover, if $U$ is an identity neighbourhood so that $U^2\subseteq V$, then $d$ and $\partial$ agree on $U$ and hence $\partial$ is also minimal. Thus, $\partial$ is both minimal and maximal.
\end{proof}

This means that the {\em Lipschitz structure} on $G$  can be defined as that given by any compatible left-invariant metric that is simultaneously minimal and maximal. Furthermore, its existence is simply equivalent to the simultaneous existence of the local Lipschitz and the quasimetric structure. We can think of these issues in terms of Figure \ref{fourth diagram}, which reverses the reductions of Figure \ref{second diagram} to a problem of reconstruction of geometric structure.

\begin{figure}
\begin{tikzcd}
&&&{\sf Uniform}\arrow[squiggly]{d}{} & &{\sf Coarse}\arrow[squiggly]{d}{}\\
&&&{\sf Local Lipschitz} \arrow[squiggly]{dr}{} &  & {\sf Quasimetric}\arrow[squiggly]{dl}{} \\
&&&&{\sf Lipschitz} &
\end{tikzcd}
\caption{Reconstruction of geometric categories}
\label{fourth diagram}
\end{figure}

As noted before, groups with minimal metrics are close to Lie groups. This is borne out by the following result due to H. Ando,  M. Doucha and Y. Matsuzawa \cite{ando}.
\begin{theorem}
Let $G$ be a connected Banach-Lie group with Banach-Lie algebra $\go g$ and define the  {\em exponential length} function ${\sf el}_G$   by the formula
$$
{\sf el}_G(g)=\inf\Big\{ \sum_{i=1}^n\norm{x_i} \Del n\geqslant 1, \; x_i\in {\go g}, \; g=\exp(x_1)\cdots\exp(x_n)\Big\}.
$$
Then the associated left-invariant metric $d(g,f)={\sf el}_G(g\inv f)$ is a compatible metric on $G$ that is simultaneously maximal and minimal and therefore defines the Lipschitz structure of $G$.
\end{theorem}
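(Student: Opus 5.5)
Here is the plan. Write $\ell={\sf el}_G$. I would first check that $\ell$ is a length function, then that it is compatible, then minimal, and finally maximal. Subadditivity is clear, since we can concatenate factorisations. Symmetry follows from $\exp(x)\inv=\exp(-x)$. Finiteness follows because a connected group is generated by $\exp$ of any neighbourhood of $0$ in $\go g$.

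\textbf{Compatibility.} Fix a ball $B_{\go g}(r)$ on which $\exp$ is a diffeomorphism onto an open $U$. For $\|x\|<r$ we trivially have $\ell(\exp x)\leqslant \|x\|$, so $\ell$ is continuous at $1$ and hence, being a length function, continuous everywhere. For the reverse bound I will use the Baker--Campbell--Hausdorff estimate. There are $r$ and $C$ such that, for $\|x\|,\|y\|<r$, one has $\log(\exp x\exp y)=x+y+R(x,y)$ with $\|R(x,y)\|\leqslant C\|x\|\|y\|$. The aim is the bound
$$
\|\log g\|\leqslant 2\,\ell(g)\qquad\text{whenever } \ell(g)<\delta
$$
for a suitable $\delta$. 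This gives injectivity of $\ell$ near $1$, so that $\ell$ is a length function, and it shows that small $\ell$-balls lie in $\exp B_{\go g}(2\delta)$, which gives compatibility. \emph{This step is the main obstacle.} A product $\exp(x_1)\cdots\exp(x_n)$ with $\sum\|x_i\|<\delta$ may have many factors, and naive iteration of BCH accumulates errors. I would use the splitting trick from Lemma \ref{lemma:birkhoff}: cut the product at the midpoint of total weight and induct on $n$. A cleaner alternative is to bound the multiplicative errors. Using the local estimate $\|\log(\exp(z)\exp(x))-z-x\|\leqslant C\|z\|\|x\|$, induction on partial products $z_k$ gives
$$
\|z_k\|\leqslant \prod_{i\leqslant k}\bigl(1+C'\|x_i\|\bigr)-1\leqslant e^{C'\delta}-1\leqslant 2\delta,
$$
provided $\delta$ is small enough to keep all partial products in the BCH domain.

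\textbf{Minimality.} I would use Theorem \ref{thm:minimal}. The upper bound $\ell(g^n)\leqslant n\ell(g)$ is automatic. For the lower bound, take $U=\exp B_{\go g}(\delta)$, where $\ell$ and $\|\log\cdot\|$ are comparable within factor $2$ by the previous step. If $g,\dots,g^n\in U$ with $g=\exp x$, then $g^k=\exp(kx)$ as long as $kx$ stays in the injectivity ball. Continuity and the hypothesis $g^k\in U$ ensure by induction that $\log g^k=kx$. Hence $\ell(g^n)\geqslant \tfrac12\|nx\|\geqslant \tfrac14 n\ell(g)$, so we may take $\eps=\tfrac14$.

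\textbf{Maximality.} I would verify condition (2) of the internal characterisation of maximal metrics. For large-scale geodesicity, given $g$, choose a factorisation $g=\exp(x_1)\cdots\exp(x_n)$ with $\sum\|x_i\|\leqslant \ell(g)+1$. Subdivide each $x_i$ into pieces $x_i/m$ of norm at most $1$, and take the $z_j$ to be the partial products. Consecutive distances are then at most $1$, and their sum is at most $\ell(g)+1\leqslant K\ell(g)$ when $\ell(g)\geqslant 1$; small $g$ are handled trivially. So $d$ is large scale geodesic, in fact almost geodesic.

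For coarse properness, I must show that every continuous left-invariant pseudometric $\partial$ is bounded on $\ell$-balls. Pick $\eta$ with $\partial\leqslant 1$ on $\exp B_{\go g}(\eta)$. Any $g$ with $\ell(g)\leqslant R$ factors as at most $R/\eta+n$ exponentials of norm less than $\eta$, after subdividing; here $n$ should be controlled by merging factors. Hence $\partial(g,1)\lesssim R/\eta+1$. This gives $(G,d)\to(G,\partial)$ Lipschitz for large distances directly, which is exactly maximality, so that in fact I can skip coarse properness and verify the definition of maximality outright.
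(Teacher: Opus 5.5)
The paper does not prove this theorem; it only quotes it from Ando, Doucha and Matsuzawa. So there is no argument to compare with, and your proposal has to stand on its own. In outline it does.
\begin{itemize}
\item The local estimate $\|\log g\|\leqslant 2\ell(g)$ for $\ell(g)<\delta$ gives both that $\ell$ is a length function and that $d$ is compatible.
\item Theorem~\ref{thm:minimal} then gives minimality as you describe. The induction $\log g^k=kx$ is cleanest via injectivity of $\exp$ on $B_{\go g}(2\delta)$, since $\|(k+1)x\|<2\delta$. In fact $\eps=\tfrac12$ works, because $\ell(\exp x)\leqslant\|x\|$.
\item Verifying the definition of maximality directly is legitimate, and it makes your large-scale geodesic paragraph superfluous.
\end{itemize}
Two steps need repair.

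\textbf{The recursion.} From $\|z_k\|\leqslant \|z_{k-1}\|+\|x_k\|+C\|z_{k-1}\|\|x_k\|$ you get
$$
1+C\|z_k\|\leqslant (1+C\|z_{k-1}\|)(1+C\|x_k\|).
$$
Hence $\|z_n\|\leqslant C^{-1}(e^{Cs}-1)\leqslant 2s$, where $s=\sum_i\|x_i\|<\min\{1/C,\,r/2\}$. The condition $s<r/2$ is what keeps every partial product inside the BCH domain, so that the induction runs. As you wrote it, without the factor $C^{-1}$, the inequality $e^{C'\delta}-1\leqslant 2\delta$ fails as soon as $C'>2$. This is harmless, since any constant in place of $2$ would do, but it should be stated correctly.

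\textbf{The maximality estimate is not finished.} The count $R/\eta+n$ is not a bound, because the number $n$ of factors in near-optimal factorisations is not controlled. Merging exponentials pairwise is precisely what accumulates BCH errors, and this is where compatibility is genuinely needed. The fix:
\begin{itemize}
\item Choose $\delta'>0$ with $\partial(h,1)\leqslant 1$ whenever $\ell(h)<\delta'$; this is possible because $\ell$-balls form a neighbourhood basis.
\item Write $g=\exp(x_1)\cdots\exp(x_n)$ with $\sum\|x_i\|<\ell(g)+1$, and subdivide so that every piece has norm $<\delta'/2$.
\item Cut the resulting word greedily into consecutive blocks, each of total norm in $[\delta'/2,\delta')$ except possibly the last.
\end{itemize}
Each block $h$ has $\ell(h)<\delta'$ straight from the definition of ${\sf el}_G$, with no need to rewrite it as a single exponential. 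There are at most $2(\ell(g)+1)/\delta'+1$ blocks, so
$$
\partial(g,1)\leqslant \tfrac{2}{\delta'}\ell(g)+\tfrac{2}{\delta'}+1,
$$
independently of $n$. Left-invariance turns this into the statement that $(G,d)\to(G,\partial)$ is Lipschitz for large distances, i.e.\ maximality. Equivalently, you could merge each block into one exponential with exponent of norm at most twice the block's weight, using your first estimate. With these two repairs the argument is a complete proof.
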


For a discussion of the converse question of whether minimal metrics are enough to construct an appropriate Lie group structure see \cite{bull}.


\section{Homeomorphism groups}

Let $M$ be a closed manifold and let
$$
{\sf Homeo}_0(M)
$$ 
be the identity component of its homeomorphism group ${\sf Homeo}(M)$ with the compact-open or, equivalently, uniform convergence topology. That is, ${\sf Homeo}_0(M)$ is the group of isotopically trivial homeomorphisms of $M$. 

Let us recall the fragmentation lemma of G. M. Fisher \cite{fisher} and  R. D. Edwards and R. C. Kirby \cite{kirby}. This states that, if 
$$
\ku U=\{U_1,\ldots, U_n\}
$$ 
is an open cover of $M$, then the set
$$
V=\{g\in {\sf Homeo}_0(M)\del g=h_1\cdots h_n \text{ for some }h_1,\ldots, h_n \text{ with }{\sf supp}(h_i)\subseteq U_i\},
$$
where ${\sf supp}(h)=\{x\in M\del h(x)\neq x\}$,
is an identity neighbourhood in ${\rm Homeo}_0(M)$. Because ${\sf Homeo}_0(M)$ is connected, it is generated by $V$, that is,
$$
{\sf Homeo}_0(M)=\bigcup_{k=1}^\infty V^k.
$$

So, it follows that we may define the {\em fragmentation metric} on ${\sf Homeo}_0(M)$ associated with $\ku U$ as the word metric
$$
\rho_{\ku U}(g,f)=\min (k\del f=gh_1\cdots h_k\;\&\; h_i\in V^\pm).
$$

For the statement of the next theorem, given a closed $m$-dimensional manifold $M$, a set $U\subseteq M$ is called an  {\em embedded open ball} if there is a homeomorphic embedding $B(2)\maps \phi M$ of the open $2$-ball $ B(2)\subseteq \R^m$, which maps the open $1$-ball $ B(1)\subseteq \R^m$ onto $U$.

\begin{theorem}\cite{katie}
Let $M$ be a closed manifold. Then ${\sf Homeo}_0(M)$ is monogenic and, if $\ku U=\{U_1, U_2, \ldots, U_n\}$ is a covering of $M$ by embedded open balls, the associated fragmentation metric $\rho_{\ku U}$ metrises the quasimetric structure on ${\sf Homeo}_0(M)$.
\end{theorem}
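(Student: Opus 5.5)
By Theorem \ref{thm:char of max} (or the characterisation of maximal metrics via condition (3)), it suffices to show two things. First, the set $V$ of products $h_1\cdots h_n$ with ${\sf supp}(h_i)\subseteq U_i$ is coarsely bounded in ${\sf Homeo}_0(M)$. Second, $\rho_{\ku U}$ is quasi-isometric to a compatible left-invariant metric, or, alternatively, the closure $\ov V$ is still bounded and $\rho_{\ov V}$ is quasi-isometric to $\rho_V$.

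By the fragmentation lemma, $V$ is an identity neighbourhood that generates ${\sf Homeo}_0(M)$. So once $V$ is bounded, the group is algebraically generated by a bounded set and hence is monogenic. Moreover, since $V$ is a bounded identity neighbourhood, the group is locally bounded. For the metric comparison I will use the identity neighbourhood $W\subseteq V$ with $\ov W\subseteq V$. Then $\rho_V\le\rho_{\ov W}$ and $\rho_{\ov W}\le C\rho_V$, because each element of the bounded set $V$ lies in some fixed power $\ov W^{\,k}$. This uses the Polish characterisation: $V\subseteq (FW)^k$, with $F$ itself lying in a power of $W$ by connectedness. Hence $\rho_V$ is quasi-isometric to $\rho_{\ov W}$, which induces the quasimetric structure by Theorem \ref{thm:char of max}.

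The main step is the boundedness of $V$. Since $V\subseteq G_1G_2\cdots G_n$, where $G_i=\{h\del {\sf supp}(h)\subseteq U_i\}$, and products of bounded sets are bounded, it is enough to show that each $G_i$ is coarsely bounded in ${\sf Homeo}_0(M)$. Fix an embedded ball $U=\phi(B(1))$ with $\phi\colon B(2)\to M$. I will verify condition (4) of Lemma \ref{char OB}: given an exhaustive increasing chain of open sets $W_1\subseteq W_2\subseteq\cdots$ with $W_m^2\subseteq W_{m+1}$, I must find $m$ with $G_U\subseteq W_m$.

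To do this, I will run a Baire category argument on the Polish group $G_U$, viewed as a closed subgroup homeomorphic to ${\sf Homeo}_\partial$ of a ball. Some $\ov{W_m}\cap G_U$ is nonmeager, so $W_{m+1}$ contains a neighbourhood in $G_U$ of the identity. Such a neighbourhood contains all homeomorphisms supported in a small ball $D\subseteq U$ that are close to the identity.

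I then upgrade this in two stages using conjugation, since conjugates by a fixed element stay in a bounded power of the $W$'s. First, every homeomorphism supported in $D$ is a product of boundedly many small ones, by an Alexander-trick or radial-rescaling argument. Second, choose $f\in{\sf Homeo}_0(M)$ supported in $\phi(B(2))$ that shrinks $\ov U$ into $D$. Then $G_U=f^{-1}G_Df$, which lies in $W_{m'}$ for some larger $m'$, because $f$ itself lies in some $W_j$.

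The delicate point is the first stage: representing an arbitrary homeomorphism supported in $D$ as a uniformly bounded product of ones that are uniformly close to the identity. I would try the standard trick of writing $h=[\,\cdot\,]$ as a commutator, or a product of two elements, each conjugate into an "infinite swindle" element. That element is an infinite product of shrinking copies of $h$ converging to a point, as in Anderson's argument. This gives $h$ as a product of a fixed number of conjugates of elements already in $W_{m+1}$ by a fixed contraction.
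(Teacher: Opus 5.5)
Your reduction to the boundedness of each $G_i=\{h\del {\sf supp}(h)\subseteq U_i\}$ is correct. So is the conjugation by a fixed $f$ shrinking $\ov U$ into a small ball $D$; note this gives only the inclusion $G_U\subseteq f\inv G_Df$, not an equality. Your comparison of $\rho_V$ with $\rho_{\ov W}$ is also correct, and handles a detail the paper leaves implicit, since $V$ need not be closed.

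The gap is the step you yourself flag as delicate. You never show that the whole of $G_D$ lies in a single $W_k$, and the swindle you sketch does not do it. For the swindle element $H=\prod_k\phi_kh\phi_k\inv$ to lie in $W_{m+1}$, $H$ must be uniformly close to the identity, independently of $h$. The only available reason is that $H$ moves each point within one of the tiny balls carrying the copies of $h$. If you grant that reasoning, it applies just as well to $h$ itself once $D$ is small enough, which makes the swindle pointless. Without it, nothing places the uncountably many elements $H$ (one for each $h$) into one fixed $W_{m+1}$.

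The missing idea is elementary. A homeomorphism $h$ maps its support ${\sf supp}(h)=\{x\del hx\neq x\}$ onto itself. So if ${\sf supp}(h)\subseteq D$, then $d(hx,x)\leqslant{\sf diam}(D)$ for all $x\in M$, and likewise for $h\inv$. Hence, if ${\sf diam}(D)<\eps$, every homeomorphism supported in $D$ already lies in the uniform $\eps$-neighbourhood of the identity. No factorisation into small pieces, Alexander trick or Anderson swindle is needed: the first stage holds with a single factor.

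This is exactly what the paper uses. Given an identity neighbourhood $O$, it picks $W\subseteq U_i$ so small that every homeomorphism supported in $W$ lies in $O$, and picks $h$ with $h[U_i]\subseteq W$. This gives $V_i\subseteq h\inv Oh$, hence boundedness by the criterion $A\subseteq (FV)^k$.

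In your formulation via condition (4) of Lemma \ref{char OB}, the Baire category step is also superfluous. The $W_m$ are open and exhaust $G$, so some $W_k$ contains $1$ and is already an identity neighbourhood of $G$. Choose $D$ small enough that $G_D\subseteq W_k$, then conjugate by $f$ as you planned.
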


\begin{proof}
Let $\ku U=\{U_1, U_2, \ldots, U_k\}$ be given and let $V$ be defined as above. Set
$$
V_i=\{g\in {\sf Homeo}_0(M)\del {\sf supp}(g)\subseteq U_i\},
$$
whereby $V\subseteq V_1\cdots V_n$. We claim that each $V_i$ and thus also $V$ is bounded.  

To see this, let $O$ be an arbitrary identity neighborhood in ${\sf Homeo}_0(M)$ and fix some small open set $W\subseteq U_i$ such that any homeomorphism $f$ with ${\sf supp}(f)\subseteq W$  is contained in $O$.   Choose also a homeomorphism $h \in {\sf Homeo}_0(M)$ such that $h[U_i]\subseteq W$.  Then, for all $g\in V_i$,
$$
{\sf supp}(hgh\inv)=h\cdot {\sf supp}(g)\subseteq h[U_i]\subseteq W
$$ 
and so 
$$
V_i\subseteq h\inv Oh.
$$
Because $O$ was arbitrary, this shows that $V_i$ is bounded.

Because $V$ generates ${\sf Homeo}_0(M)$, it follows that ${\sf Homeo}_0(M)$ is monogenic and that the associated word metric $\rho_{\ku U}$ metrises the quasimetric structure on ${\sf Homeo}_0(M)$.
\end{proof}

Suppose now that $S$ is a closed orientable surface with universal cover $\tilde S\overset \pi\longrightarrow S$. Assume also that $d$ is a compatible proper metric on $\tilde S$ invariant under deck-transformations and that $D\subseteq \tilde S$ is a compact fundamental domain, that is, $\pi[D]=S$. Define for $f\in {\sf Homeo}_0(S)$
$$
\ell(f)={\sf diam}_d\big(\tilde f[D]\big),
$$
where $\tilde f$ is some lift of $f$ to $\tilde S$. Then E. Militon \cite{militon} shows that there is a constant $K$ so that
$$
\frac 1K \ell(g\inv f) -K\leqslant \rho_{\ku U}(g,f) \leqslant K \ell(g\inv f)+K
$$
for all $g,f\in {\sf Homeo}_0(S)$. In other words, the fragmentation metric can be understood in terms of the maximal displacement on the universal cover.

\begin{exa}
For every $n\geqslant 1$, the homeomorphism group
$$
{\sf Homeo}(S^n)
$$
of the $n$-dimensional sphere is globally bounded \cite{OB}. On the other hand, if $M$ is a compact manifold of dimension at least $2$ with infinite fundamental group, then there is an isomorphic coarse embedding 
$$
C([0, 1]) \to {\sf  Homeo}_0(M)
$$
and thus ${\sf Homeo}_0(M)$ is coarsely universal among all separable metric spaces \cite{katie}.
It is completely open what happens for other closed manifolds with finite fundamental group.
\end{exa}


\section{Non-Archimedean Polish groups}
Consider a closed manifold $M$. The study of its group ${\sf Homeo}(M)$ of homeomorphisms to some extent splits into two very different topics, namely, the structure of the identity component ${\sf Homeo}_0(M)$, which is an open normal subgroup of ${\sf Homeo}(M)$, and then the structure of the {\em mapping class group} 
$$
{\sf MCG}(M)={}^{{\sf Homeo}(M)}/{}_{{\sf Homeo}_0(M)}.
$$
Because ${\sf Homeo}_0(M)$ is open in ${\sf Homeo}(M)$, the mapping class group ${\sf MCG}(M)$ is countable and thus amenable to the standard techniques of geometric group theory. 

However, if one passes to non-compact manifolds $M$, such as the $2$-sphere with a Cantor set of punctures or an infinite-genus surface, this nice setup breaks down. Indeed, the identity component ${\sf Homeo}_0(M)$ need no longer be open, but only closed, and hence the mapping class group may be an uncountable Polish group. Nevertheless, the mapping class group may still be analysed using some of the same tools as before. Concretely, suppose for simplicity that $M$ is a connected orientable surface without boundary and let $C(M)$ be the {\em curve graph} of $M$, whose vertices are the isotopy classes of essential simple closed curves in $M$ and where two such classes are connected by an edge in $C(M)$ provided they admit disjoint realisations in $M$. Let also 
$$
{\sf MCG}_+(M)={}^{{\sf Homeo}_+(M)}/{}_{{\sf Homeo}_0(M)}
$$
denote the group of orientation-preserving mapping classes. Then ${\sf MCG}_+(M)$ acts canonically on $C(M)$ and, as shown independently by J. Bavard, S. Dowdall and K. Rafi \cite{bavard} and J. Hern\'andez Hern\'andez, I. Morales and F. Valdez \cite{hernandez}, this action induces an isomorphism of topological groups
$$
{\sf MCG}_+(M) \iso {\sf Aut}(C(M)),
$$
where the latter is equipped with the so-called {\em permutation group topology}. 

To introduce the latter topology, we consider a more general setting. Namely, the context of our discussion below is models of first-order logic, that is, structures of the form ${\ku A}=\langle A, \{\phi_i\}_{i\in I}, \{R_j\}_{j\in J}\rangle$, where $A$ is some set, each $\phi_i\colon A^{k_i}\to A$ is a function of some finite number $k_i$ of variables and $R_j\subseteq A^{k_j}$ is a relation of finite arity $k_j$. The {\em automorphism group} of $\ku X$ is then the group ${\sf Aut}(\ku A)$ of all permutations $g$ of $A$ that commute with the functions $\phi_i$,
$$
g\big(\phi_i(a_1,\ldots, a_{k_i})\big)=\phi_i\big(g(a_1),\ldots,g(a_{k_i})\big)
$$
and preserve the relations $R_j$, 
$$
(a_1,\ldots, a_{k_j})\in R_j\equi \big(g(a_1),\ldots,g(a_{k_j})\big)\in R_j.
$$
The permutation group topology on ${\sf Aut}(\ku A)$ is the topology induced by the inclusion ${\sf Aut}(\ku A)\subseteq A^A$ where $A$ is given the discrete topology. In other words, this is the topology obtained by declaring pointwise stabilisers 
$$
W_a=\big\{g\in {\sf Aut}(\ku A)\del g(a)=a\big\}
$$
to be open. 

A topological group $G$ is said to be {\em non-Archimedean} if it has a neighbourhood basis at the identity consisting of open subgroups. One may easily show that a Polish group $G$ is non-Archimedean if and only if it isomorphic to the automorphism group ${\sf Aut}(\ku A)$ for some countable structure $\ku A$. In fact, $\ku A$ may even be taken to be a countable graph. The mapping class groups ${\sf MCG}_+(M)$ above are thus one particular source of interesting non-Archimedean Polish groups.
To get an understanding of their geometry, we investigate their actions on graphs.


\section{Actions on graphs}
Suppose $G$ is a topological group acting continuously by automorphisms on a connected graph $\ku X$. For our modest purposes, we may simply view $\ku X$ as a first order structure $\ku X=\langle V,E\rangle$ consisting of a non-empty set $V$ of vertices equipped with a symmetric edge relation $E\subseteq V\times V$. Thus, an action of $G$ by automorphisms on $\ku X$ can be viewed as an action $G\curvearrowright V$ by permutations so that $E$ is invariant under the diagonal action  $G\curvearrowright V\times V$,  
$$
g\cdot(v,w)=(gv,gw).
$$
In particular, this induces an action $G\curvearrowright E$. Furthermore, continuity of the action means that the associated mapping $G\to {\sf Aut}(\ku X)$ is continuous, that is,  that each vertex stabiliser
$$
G_v=\{g\in G\del gv=v\}
$$
is an open subgroup of $G$. Observe also that 
$$
G_{fv}=fG_vf\inv
$$
for all $v\in V$ and $f\in G$.

The action $G\curvearrowright \ku X$ is said to be {\em cofinite} if the quotient spaces  $G\setminus V$ and $G\setminus E$ are finite. Because the graphs we consider are connected, every vertex belongs to an edge, which means that the action is cofinite if and only if just $G\setminus E$ is finite. Similarly, because $\ku X$ is connected, the shortest path distance in $\ku X$ defines a metric  $\rho$  on the vertex set $V$. Finally, for every edge $e=(v,w)\in E$, we let $o(e)=v$ and $t(e)=w$.

\begin{theorem}\label{action}
Let  $G$ be a topological group acting continuously by automorphisms on a connected graph $\ku X=\langle V,E\rangle$. Assume also that the action is cofinite and that the vertex stabilisers $G_w$ are bounded in $G$. Then $G$ is monogenic and, for any vertex $v\in V$, the evaluation map 
$$
g\in G\mapsto gv\in V
$$
is a  quasi-isometry between $G$ and the metric space $(V,\rho)$.
\end{theorem}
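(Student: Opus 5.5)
This is a Milnor–Schwarz type argument. Fix the vertex $v$ and choose representatives $e_1,\ldots,e_m$ of the finitely many $G$-orbits on $E$. Since $\ku X$ is connected, for each $i$ we pick $a_i,b_i\in G$ with $o(e_i)=a_iv$ and $t(e_i)=b_iv$. Set
$$
B=G_v\cup\bigcup_{i=1}^m a_iG_{o(e_i)}\cup b_iG_{t(e_i)}\cup \{a_i\inv b_i\}^\pm\cup G_v\{a_i\inv b_i\}^\pm G_v .
$$
Finite unions, products and translates of bounded sets are bounded, and stabilisers are bounded by hypothesis, so $B$ is bounded; taking the finite set $S=\{a_i\inv b_i\}^\pm$ and then $B=G_v\cup G_vSG_v$ is the cleanest choice.

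The first step is generation. Take $g\in G$ and a path $v=w_0,w_1,\ldots,w_n=gv$ in $\ku X$ with $n=\rho(v,gv)$. By induction we produce $h_0=1,h_1,\ldots,h_n$ with $h_kv=w_k$ and $h_{k-1}\inv h_k\in G_vSG_v$. For the inductive step, the edge $(w_{k-1},w_k)$ equals $f e_i$ (or $fe_i^{\sf T}$) for some $f\in G$ and some $i$. Then $fa_iv=w_{k-1}=h_{k-1}v$, so $h_{k-1}\inv fa_i\in G_v$. Setting $h_k=fb_i$ (or the symmetric variant) gives $h_kv=w_k$ and
$$
h_{k-1}\inv h_k=(h_{k-1}\inv f a_i)(a_i\inv b_i)\in G_vS.
$$
Finally $h_n\inv g\in G_v$. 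Hence $g\in (G_vSG_v)^{n}G_v$, so $B$ generates $G$ and
$$
\rho_B(1,g)\leqslant \rho(v,gv)+1 .
$$
By Theorem~\ref{thm:char of max}, and by condition (3) of the characterisation of maximal metrics, if we knew $\rho_B$ were quasi-isometric to a compatible metric we would be done. I expect this point to be the main obstacle, since $B$ need not be closed. I would handle it directly: $B$ is a finite union of cosets of the open subgroup $G_v$, hence open and closed, so $\ov B=B$. Theorem~\ref{thm:char of max} then gives that $G$ is monogenic and that $\rho_B$ induces its quasimetric structure.

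The second step is the reverse inequality $\rho(v,gv)\leqslant K\rho_B(1,g)$. Set $K=\max_{b\in B}\rho(v,bv)$, which is finite because $B$ is a finite union of sets $G_vsG_v$, and each of these moves $v$ to the single point set $G_vsv$, of which there are finitely many relevant distances. Since $G$ acts by isometries, for $g=z_1\cdots z_k$ with $z_j\in B^\pm$,
$$
\rho(v,gv)\leqslant \sum_{j}\rho(z_1\cdots z_{j-1}v,\,z_1\cdots z_jv)=\sum_j\rho(v,z_jv)\leqslant Kk .
$$
The bound for $K$ holds because $\rho(v,g_1sg_2v)=\rho(v,g_1sv)=\rho(v,sv)$.

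The last step is to observe that both estimates are invariant under left translation: $\rho_B(g,f)=\rho_B(1,g\inv f)$ and $\rho(gv,fv)=\rho(v,g\inv fv)$. Hence
$$
\tfrac1K\rho(gv,fv)\leqslant \rho_B(g,f)\leqslant \rho(gv,fv)+1 ,
$$
so the orbit map $g\mapsto gv$ is Lipschitz for large distances in both directions. It remains to check coarse surjectivity: every vertex lies on an edge $fe_i$, hence is within distance $\max_i\rho(v,a_iv)$, or the analogous bound with $b_i$, of $fa_iv\in Gv$. This yields the quasi-inverse, and therefore the evaluation map is a quasi-isometry between $G$ with its quasimetric structure and $(V,\rho)$.
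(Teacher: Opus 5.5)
There is a genuine gap at the very first step. You pick $a_i,b_i\in G$ with $o(e_i)=a_iv$ and $t(e_i)=b_iv$ ``since $\mathcal X$ is connected''. But connectedness only gives a \emph{path} from $v$ to $o(e_i)$, not a group element moving $v$ to $o(e_i)$. Such $a_i,b_i$ exist only when every vertex lies in the orbit $Gv$, i.e.\ when $G$ acts transitively on $V$, whereas cofiniteness allows finitely many vertex orbits. For instance, the trivial group acting on the graph with two vertices $u\neq w$ and one edge satisfies all hypotheses, yet with $v=u$ there is no $b$ with $bv=w$. Less degenerately, $\mathsf{Aut}(\mathcal T_\infty)$ acting on the barycentric subdivision of $\mathcal T_\infty$ has two vertex orbits and bounded stabilisers. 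The same assumption drives your generation step, where you need $h_k$ with $h_kv=w_k$ for each vertex $w_k$ of a geodesic from $v$ to $gv$; if $w_k\notin Gv$, no such $h_k$ exists. Your coarse-surjectivity paragraph, which bounds the distance from a vertex to some $fa_iv$, suggests you sensed this, but under your set-up every vertex already \emph{equals} some $fa_iv$. So as written you have proved the theorem only for vertex-transitive actions. That covers the graph built in Theorem~\ref{existence}, but not the statement as given.

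The missing idea is the bookkeeping in the paper's proof. Take a finite transversal $T\ni v$ for $G\backslash V$, edge representatives $e\in S$ with $o(e)\in T$, and a finite set $F$ of elements $f$ with $t(e)=f\cdot o(e')$ whenever $e,e'\in S$ and $t(e)\in G\cdot o(e')$. Then $A=F\cup\bigcup_{u\in T}G_u$ is bounded. Write a geodesic from $v$ to $gv$ as an edge path $h_1e_1,\ldots,h_ne_n$ with $e_i\in S$, tracking edges rather than vertices. One gets $h_1\in G_v$, $h_{i+1}\in h_if_iG_{o(e_{i+1})}$, and finally $g\in h_nf_nG_v$, so $g\in A^{2n+1}$. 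With $A$ in place of your $B$ (it is closed, being a finite set together with finitely many open subgroups), the rest of your direct argument survives:
\begin{itemize}
\item $\rho(v,av)\leqslant 2\rho(u,v)$ for $a\in G_u$, so $K=\max_{a\in A}\rho(v,av)<\infty$;
\item left-invariance gives the two-sided estimate;
\item every vertex lies within $\max_{u\in T}\rho(u,v)$ of $Gv$.
\end{itemize}
That would be a self-contained alternative to the paper's route, which proves only coarse properness of the action and then cites the Milnor--Schwarz lemma.

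Two smaller remarks. First, $G_vsG_v$ is in general an infinite union of left cosets of $G_v$; it is still clopen, so $\overline B=B$ holds. Second, Theorem~\ref{thm:char of max} is stated only for Polish groups, while the theorem concerns arbitrary topological groups.
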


\begin{proof}
Fix $v\in V$. Since $G$ acts continuously by isometries on the large scale geodesic metric space $(V,\rho)$, if we can show that the action is {\em coarsely proper}, that is, that, for every $m\geqslant 1$, the set 
$$
\{g\in G\del \rho (gv,v)\leqslant m\}
$$
is bounded in $G$, then the evaluation map $g\mapsto gv$ defines a coarse equivalence by the Milnor--Schwartz lemma \cite[Theorem 2.77]{coarsebook}.

Because the action $G\curvearrowright \ku X$ is cofinite, we may find finite transversals  $T\subseteq V$ and $S\subseteq E$ for the corresponding $G$-actions. Without loss of generality, $v\in T$. Note also that, as every edge can be mapped via $G$ to an edge $e$ such that $o(e)\in T$, we may suppose that $S$ is chosen so that $o(e)\in T$ for all $e\in S$. For all $e,e'\in S$ such that
$$
G\cdot t(e)=G\cdot o(e')
$$
pick also some $f\in G$ such that 
$$
t(e)=f\cdot o(e').
$$
and let $F\subseteq G$ be the finite collection of these $f$. We set 
$$
A=F\cup \bigcup_{u\in T}G_u,
$$ 
which is bounded in $G$.

Assume also that $g\in G$ with $\rho(gv,v)=n$. Then we may find some $e_1,\ldots,e_n\in S$ and $h_1,\ldots, h_n\in G$ such that 
$$
h_1e_1,\ldots, h_ne_n
$$ 
form an edge path in $\ku X$ from $o(h_1e_1)=v$ to $t(h_ne_n)=gv=o(gh_1e_1)$. In particular, $G\cdot t(e_i)=G\cdot o(e_{i+1})$ for all $i<n$ and hence  there are $f_1,\ldots, f_{n-1}\in F$ so that
$$
t(e_i)=f_{i}\cdot o(e_{i+1})
$$ 
and thus also
\maths{
h_if_{i}\cdot o(e_{i+1})
=h_i\cdot t(e_{i})
=t(h_ie_i)
=o(h_{i+1}e_{i+1})
=h_{i+1}\cdot o(e_{i+1})
}
for all $i<n$. It therefore follows that $h_{i+1}\in h_if_{i}G_{o(e_{i+1})}$ for all $i<n$ and thus that
\maths{
h_n
&\in h_{n-1}f_{n-1}G_{o(e_{n})}\\
&\subseteq h_{n-2}f_{n-2}G_{o(e_{n-1})}f_{n-1}G_{o(e_{n})}\\
&\subseteq \ldots\\
&\subseteq h_{1}f_{1}G_{o(e_{2})}\cdots f_{n-2}G_{o(e_{n-1})}f_{n-1}G_{o(e_{n})}.
}
Note also that, as $e_1\in S$, we have both $o(e_1)\in T$ and $h_1\cdot o(e_1)=o(h_1e_1)=v\in T$, whereby
$$
h_1\cdot o(e_1)=o(e_1).
$$ 
So $h_1\in G_{o(e_1)}$ and 
\mathseq{eq1}{
h_n
&\in h_{1}f_{1}G_{o(e_{2})}\cdots f_{n-2}G_{o(e_{n-1})}f_{n-1}G_{o(e_{n})}\\
&\subseteq G_{o(e_{1})}f_{1}G_{o(e_{2})}\cdots f_{n-2}G_{o(e_{n-1})}f_{n-1}G_{o(e_{n})}\\
&\subseteq A^{2n-1}.\\
}
Finally, since $gh_1\cdot o(e_1)=gv=h_n\cdot t(e_n)$, there is some $f_n\in F$ so that $t(e_n)=f_n\cdot o(e_1)$. It thus follows that
$$
h_nf_n\cdot o(e_1)=h_n\cdot t(e_n)=t(h_ne_n)=gv=g\cdot o(e_1),
$$
whereby $g\in h_nf_nG_{o(e_1)}\subseteq A^{2n+1}$.  

Because $n$ was arbitrary, this shows that, for all $m$,
$$
\big\{g\in G\del \rho(gv,v)\leqslant m\big\}\subseteq \bigcup_{n\leqslant m}A^{2n+1}
$$
and therefore the former set is bounded in $G$.
\end{proof}

\begin{theorem}\label{existence}
Suppose $G$ is a monogenic non-Archimedean Polish group. Then $G$ admits a cofinite continuous action on a connected graph $\ku X$ with bounded vertex stabilisers. 
\end{theorem}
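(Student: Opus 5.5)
We construct the graph as a coset graph of a suitable open subgroup. Since $G$ is monogenic, Theorem \ref{thm:char of max} lets us fix a bounded set $B\subseteq G$ that algebraically generates $G$. We first make $G$ locally bounded: a maximal metric is in particular coarsely proper, so the coarse structure is metrisable, and Theorem \ref{locally (OB)} then gives a bounded identity neighbourhood. Because $G$ is non-Archimedean, this neighbourhood contains an open subgroup $H$, which is therefore bounded.

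Next we replace $B$ by a finite set. Since $G$ is Polish, $H$ is open and $G$ is separable, the coset space $G/H$ is countable. By the Proposition characterising boundedness (applied with $V=H$), there are a finite set $F\subseteq G$ and $k\geqslant 1$ with $B\subseteq (FH)^k$. Thus $G=\langle B\rangle\subseteq\langle F\cup H\rangle$. Put $S=(F\cup F\inv)\setminus H$, so that $G$ is generated by $H\cup S$.

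Now define $\ku X$. Its vertex set is $V=G/H$, and $gH$ is joined to $g'H$ by an edge precisely when $gH\neq g'H$ and $g\inv g'\in HSH$. This relation is well defined and symmetric, since $S=S\inv$ modulo $H$ and $HSH$ is symmetric. Left multiplication preserves the relation, so $G$ acts by automorphisms.

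We then check the required properties one at a time.
\begin{itemize}
\item \emph{Continuity.} The stabiliser of $gH$ is $gHg\inv$, which is open and, being a conjugate of the bounded set $H$, bounded. This gives both continuity and bounded vertex stabilisers.
\item \emph{Connectedness.} Every $g\in G$ is a product of elements of $H\cup S$, and an induction on the length of such a word shows that $gH$ is joined to $H$ by a path.
\item \emph{Cofiniteness.} Take an edge $(gH,g'H)$. Translating by $g\inv$ moves it to $(H,xH)$ with $x=g\inv g'\in HsH$ for some $s\in S$. Translating further by an element of $H$ moves it to $(H,sH)$. Hence the $G$-orbits of edges are represented by the finitely many edges $(H,sH)$ with $s\in S$. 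There is a single orbit of vertices.
\end{itemize}

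The main obstacle is the passage from an arbitrary bounded generating set to a compact generating set of the form $H\cup F$, with $H$ an open subgroup and $F$ finite. This uses local boundedness, obtained from monogenicity via Theorem \ref{locally (OB)}, together with the Polish characterisation of boundedness. The remaining verifications are routine.
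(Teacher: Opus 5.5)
Your proof is correct and follows the paper's argument essentially step for step: get local boundedness and a bounded generating set from monogenicity, choose a bounded open subgroup $H$, absorb the generating set into $(FH)^k$ for a finite $F$, and take the coset graph on $G/H$ with edges given by double cosets $HfH$. The only difference is cosmetic: passing to $S=(F\cup F^{-1})\setminus H$ removes loops and makes the symmetry of the edge relation explicit, whereas the paper simply takes $F$ symmetric and allows elements of $F$ to lie in $W$.
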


\begin{proof}
Because $G$ is monogenic, i.e., admits a maximal metric, it is locally bounded by Theorem \ref{locally (OB)} and generated by a bounded subset $A\subseteq G$ by Theorem \ref{thm:char of max}. Also, because $G$ is non-Archimedean, $G$ admits a neighbourhood basis at the identity consisting of open subgroups. In particular, $G$ has an open subgroup $W\leqslant G$ that is bounded in $G$, whereby, as $A$ is bounded, there is a finite symmetric set $F\subseteq G$ and some $p$ so that $A\subseteq (FW)^p$. It thus follows that $G$ is generated by the bounded symmetric set $F\cup W$.

We define a graph $\ku X$ with vertex set $G/W$ by defining the set $E$ of edges by
$$
(gW,hW)\in E\;\equi\; Wg\inv hW= WfW \quad\text{ for some }f\in F.
$$
Then it is clear that the action of $G$ on $G/W$ by left multiplication is an action by automorphisms of the graph $\ku X$ and that $(gW,gwfW)$ is an edge for all $g\in G$, $w\in W$ and $f\in F$. In particular, if $w_1,\ldots, w_n\in W$ and $f_1,\ldots, f_n\in F$, the sequence of vertices
\maths{
&w_1f_1\cdots w_{n-1}f_{n-1}w_nf_nW, \\
&w_1f_1\cdots w_{n-1}f_{n-1}W,\\
& \ldots\\
& w_1f_1W,\\
&W
}
forms a vertex path in $\ku X$. Because $G=\bigcup_{n=0}^\infty (WF)^n$, it follows that $\ku X$ is connected. Also, as $F$ is finite, the action $G\curvearrowright \ku X$ is cofinite. Finally, note that, for every vertex $gW$ in $\ku X$, the vertex stabiliser
$$
{\sf stab}(gW)=gWg\inv
$$
is bounded in $G$. 
\end{proof}

Theorems \ref{action} and \ref{existence} provide a recipe for how to compute the quasimetric structure of a monogenic non-Archimedean Polish group $G$. Namely, by finding a bounded open subgroup $W$ and a finite symmetric set $F\subseteq G$ so that the union $F\cup W$ generates $G$. It is thus imperative to find methods for determining when an open subgroup $W\leqslant G$ is bounded in $G$. This problem is addressed in the next section.


\section{Independence relations}
Recall that a Polish group $G$ is non-Archimedean if and only if it is isomorphic to the automorphism group ${\sf Aut}(\ku A)$ for some countable first-order structure $\ku A$. So, in the following, let us fix such a structure $\ku A$ and let $\Omega$ be the collection of all finite length tuples 
$$
\ov a=(a_1,\ldots, a_n)
$$
of elements $a_i$ of the structure $\ku A$. We use $\ov a, \ov b, \ov c, \ldots$  as variables for elements of $\Omega$ and shall write $(\ov a, \ov b)$ to denote the concatenation of two tuples $\ov a $ and $\ov b$. The automorphism group ${\rm Aut}({\ku A})$ acts naturally on $\Omega$ via 
$$
g\cdot (a_1, \ldots, a_n)=(ga_1, \ldots, ga_n).
$$
With this notation, the pointwise stabiliser subgroups 
$$
W_{\ov a}=\{g\in {\rm Aut}({{\ku A}})\del g\cdot \ov a=\ov a\},
$$ 
with $\ov a\in \Omega$, form a neighbourhood basis at the identity in ${\rm Aut}({\ku A})$. An {\em orbital type} $\ku O$ in ${\ku A}$ is simply the orbit of some tuple $\ov a$ under the action of ${\rm Aut}({{\ku A}})$.  Also, we let $\OO(\ov a)$ denote the orbital type of $\ov a$ , i.e., 
$$
\OO(\ov a)={\rm Aut}({{\ku A}})\cdot \ov a.
$$
Similarly, define
$$
{\ku O}(\ov a, \ov b)=\big\{(g\ov a,g \ov b)\del g\in {\sf Aut}({\ku A})\big\}
$$
and
$$
\OO(\ov b/\ov a)=W_{\ov a}\cdot \ov b=\{g\cdot \ov b\del g\in W_{\ov a}\}.
$$
Thus, for example, the following equivalences hold for all automorphisms $g,f\in {\sf Aut}({\ku A})$,
\begin{equation}\label{double-sided}
{\ku O}(g\ov b/\ov a)={\ku O}(f\ov b/\ov a)
\;\; \equi\;\;
{\ku O}(g\ov b,\ov a)={\ku O}(f\ov b,\ov a)
\;\;\equi \;\; 
W_{\ov a}gW_{\ov b}=W_{\ov a}fW_{\ov b}.
\end{equation}

\begin{definition}
Suppose ${\ku A}$ is a countable structure and $\ov a\in \Omega$. A binary relation $\forkindep$ on $\Omega$ is said to be an {\em orbital ${\ov a}$-independence relation}\footnote{This slightly generalises the  notion of independence relations from \cite{coarse book}. The abstract study of independence relations is a central part of current model theory. } provided that it satisfies the following three properties,
\begin{enumerate}
\item(monotonicity) for all tuples $\ov b$, $\ov c$ and $\ov d$,  if $(\ov c,\ov d)\forkindep\ov b$, then also 
$$
\ov c\forkindep\ov b \quad\text{ and }\quad \ov d\forkindep\ov b,
$$
\item(existence) for all tuples $\ov b$ and $\ov c$, there is some $g\in W_{\ov a}$ so that 
$$
g\ov c\forkindep \ov b,
$$
\item(finite stationarity) for all tuples $\ov b$, the set
$$
\big \{{\ku O}(\ov c/\ov b)\del {\ku O}(\ov c/\ov a)={\ku O}(\ov b/\ov a) \;\;\&\;\; \ov c\forkindep \ov b\big\}
$$
is finite.
\end{enumerate}
\end{definition}

The relation $\ov c\forkindep\ov b$  reads as  {\em $\ov c$ is independent from $\ov b$ over $\ov a$}, which also hints at its origins.

\begin{theorem}\cite{coarsebook}
If ${\ku A}$ is a countable structure and $\forkindep$ is an orbital $\ov a$-independence relation for some finite tuple $\ov a$, then the pointwise stabiliser $W_{\ov a}$ is globally bounded.
\end{theorem}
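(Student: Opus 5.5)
The plan is to verify the criterion of the Proposition characterising bounded sets in Polish groups, applied to the group $W_{\ov a}$ itself. Since $W_{\ov a}$ is an open, hence closed, subgroup of the Polish group ${\sf Aut}({\ku A})$, it is Polish. To show that $W_{\ov a}$ is globally bounded, it therefore suffices to show the following: for every identity neighbourhood $V$ of $W_{\ov a}$ there are a finite set $F\subseteq W_{\ov a}$ and some $k$ with $W_{\ov a}\subseteq (FV)^k$.

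First I reduce to a convenient basic neighbourhood. The stabilisers $W_{\ov b}$ form a neighbourhood basis at the identity. Replacing $\ov b$ by the concatenation $(\ov a,\ov b)$ only shrinks the stabiliser, so I may assume $V\supseteq W_{\ov b}$ where $\ov b$ extends $\ov a$. Then $W_{\ov b}\leqslant W_{\ov a}$.

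Next I apply finite stationarity to this $\ov b$. This gives finitely many orbital types ${\ku O}(\ov c/\ov b)$ with ${\ku O}(\ov c/\ov a)={\ku O}(\ov b/\ov a)$ and $\ov c\forkindep\ov b$. Each such $\ov c$ lies in $W_{\ov a}\cdot\ov b$, so I can choose representatives $k_1\ov b,\ldots,k_m\ov b$ of these types with $k_i\in W_{\ov a}$. Set $F=\{1,k_1^{\pm1},\ldots,k_m^{\pm1}\}$. By (\ref{double-sided}), whenever $h\in W_{\ov a}$ satisfies $h\ov b\forkindep\ov b$, we get $h\ov b\in W_{\ov b}k_i\ov b$ for some $i$, and hence $h\in W_{\ov b}k_iW_{\ov b}$.

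The main step, and the only delicate point, is how to use existence. Monotonicity only applies in the left argument, so I cannot make a single tuple independent from two tuples at once. Instead, given $g\in W_{\ov a}$, I place both relevant tuples on the left. By existence, applied to the tuples $\ov b$ and $(\ov b,g\inv\ov b)$, there is $h\in W_{\ov a}$ with $(h\ov b,hg\inv\ov b)\forkindep\ov b$. Monotonicity then gives $h\ov b\forkindep\ov b$ and $hg\inv\ov b\forkindep\ov b$.

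Both $h$ and $hg\inv$ lie in $W_{\ov a}$, so both tuples have the same type over $\ov a$ as $\ov b$. The previous paragraph therefore yields $h\in W_{\ov b}k_iW_{\ov b}$ and $hg\inv\in W_{\ov b}k_jW_{\ov b}$ for some $i,j$. Consequently
$$
g\inv=h\inv\cdot hg\inv\in W_{\ov b}k_i\inv W_{\ov b}k_jW_{\ov b}\subseteq VFVFV\subseteq (FV)^3 .
$$
Since $W_{\ov a}$ is a group, $g$ ranges over $W_{\ov a}$ exactly as $g\inv$ does, so $W_{\ov a}\subseteq (FV)^3$. The Proposition then shows that $W_{\ov a}$ is bounded in itself, that is, globally bounded.
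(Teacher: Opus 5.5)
Your proposal is correct and follows the paper's proof essentially step for step. Finite stationarity gives finitely many double cosets $W_{\ov b}fW_{\ov b}$, and existence applied to the concatenated tuple $(\ov b, g^{\pm 1}\ov b)$, together with monotonicity, yields $W_{\ov a}\subseteq W_{\ov b}F\inv W_{\ov b}FW_{\ov b}$. The differences are only cosmetic: you track $g\inv$ rather than $g$, and you make explicit two points the paper leaves implicit, namely the reduction to $\ov b$ extending $\ov a$ (so that $W_{\ov b}\leqslant W_{\ov a}$) and the appeal to the boundedness criterion for the Polish group $W_{\ov a}$.
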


\begin{proof}
Assume $\ov b\in \Omega$. We show that there is a finite set $F\subseteq W_{\ov a}$ so that 
$$
W_{\ov a}\subseteq W_{\ov b}\cdot F\inv\cdot W_{\ov b}\cdot F\cdot W_{\ov b},
$$
which, because the pointwise stabiliser subgroups form a neighbourhood basis at the identity, will show that $W_{\ov a}$ is globally bounded.

So, choose by finite stationarity some $\ov c_1,\ldots, \ov c_n\in \Omega$ satisfying
$$
{\ku O}(\ov c_i/\ov a)={\ku O}(\ov b/\ov a) \quad\&\quad \ov c_i\forkindep \ov b
$$
and so that, if ${\ku O}(\ov c/\ov a)={\ku O}(\ov b/\ov a)$  and $\ov c\forkindep \ov b$ for some $\ov c\in \Omega$, then 
$$
{\ku O}(\ov c/\ov b)={\ku O}(\ov c_i/\ov b)
$$
for some $i$. For each $i$, because ${\ku O}(\ov c_i/\ov a)={\ku O}(\ov b/\ov a)$, we can find $f_i\in W_{\ov a}$ so that $\ov c_i=f_i\ov b$. Let $F=\{f_1,\ldots, f_n\}$.

Suppose now that $g\in W_{\ov a}$. By existence, there is some $h\in W_{\ov a}$ so that
$$
h(\ov b,g\ov b)\forkindep \ov b,
$$
whereby, using monotonicity, we have
$$
{\ku O}(h\ov b/\ov a)={\ku O}(\ov b/\ov a)\quad\&\quad  h\ov b\forkindep \ov b
$$
and
$$
{\ku O}(hg\ov b/\ov a)={\ku O}(\ov b/\ov a)\quad\&\quad  hg\ov b\forkindep \ov b.
$$
By the choice of the $\ov c_i$ and $f_i$, it follows that
$$
{\ku O}(h\ov b/\ov b)={\ku O}(f_i\ov b/\ov b)
$$
and 
$$
{\ku O}(hg\ov b/\ov b)={\ku O}(f_j\ov b/\ov b)
$$
for some $i,j$. Thus, by Equation \ref{double-sided}, 
$$
W_{\ov b}hW_{\ov b}=W_{\ov b}f_iW_{\ov b}
$$
and 
$$
W_{\ov b}hgW_{\ov b}=W_{\ov b}f_jW_{\ov b}.
$$
We thus conclude that 
$$
g\;\in\;  h\inv W_{\ov b}f_jW_{\ov b}\;\subseteq\; W_{\ov b}F\inv W_{\ov b}FW_{\ov b}
$$
as required.
\end{proof}


\begin{exa}[The $\aleph_0$-regular tree]\label{tree}
Consider the $\aleph_0$-regular tree $\ku T_\infty$. That is, $\ku T_\infty=\langle V,E\rangle$ is a countable connected undirected graph without loops in which every vertex has infinite valence. Since $\ku T_\infty$ is a tree, it admits a natural notion of {\em convex hull}, namely, for a set of vertices $A$ and a vertex $v$, we set $v\in {\sf conv}(A)$ if there are $w,u\in A$ so that $v$ lies on the unique path from $w$ to $u$. 

Fix now any vertex $a$ in $\ku T_\infty$. For finite tuples of vertices $\ov b, \ov c$ enumerating finite subsets $B,C\subseteq V$, set 
$$
\ov b\forkindep\ov c\;\equi\; {\sf conv}(B\cup\{a\})\cap {\sf conv}(C\cup\{a\})=\{a\}.
$$
We claim that $\forkindep$ is an orbital $a$-independence relation on $\ku T_\infty$ and therefore that the stabiliser $W_a$ is globally bounded.

That $\forkindep$ is monotone is obvious. Also, because $B$ and $C$ are finite sets, then so are ${\sf conv}(B\cup\{a\})$ and ${\sf conv}(C\cup\{a\})$.  This makes it is easy to find some $g\in {\sf Aut}(\ku A)$ fixing $a$, so that 
$$
g\big[{\sf conv}(C\cup\{a\})\big]\cap {\sf conv}(B\cup\{a\})=\{a\}.
$$
Since then $g\big[{\sf conv}(C\cup\{a\})\big]={\sf conv}(gC\cup\{a\})$, one sees that $g\ov c\forkindep\ov b$, thus verifying the existence condition. 

Finally, for  finite stationarity, suppose a tuple $\ov b$ is given enumerating some finite $B\subseteq V$. Suppose also $\ov c_1$ and $\ov c_2$ are two tuples so that 
$$
\OO(\ov c_1/a)=\OO(\ov c_2/a)=\OO(\ov b/a),
$$
$\ov c_1\forkindep\ov b$ and $\ov c_2\forkindep\ov b$. This means that there is some $g\in {\sf Aut}(\ku A)$ fixing $a$ so that $g\ov c_1=\ov c_2$. Furthermore, if $C_1$ and $C_2$ are the sets enumerated by $\ov c_1$ and $\ov c_2$, then 
$$
{\sf conv}(B\cup\{a\})\cap {\sf conv}(C_1\cup\{a\})=\{a\}={\sf conv}(B\cup\{a\})\cap {\sf conv}(C_2\cup\{a\}).
$$
It is then easy to find some $f\in {\sf Aut}(\ku A)$ that fixes all of ${\sf conv}(B\cup\{a\})$ while $f\ov c_1=\ov c_2$, which, in turn, implies that 
$$
\OO(\ov c_1/\ov b)=\OO(\ov c_2/\ov b)
$$
and therefore witnesses finite stationarity.

We now see that the tautological action ${\sf Aut}(\ku T_\infty)\curvearrowright \ku T_\infty$ is a continuous action on a connected graph with (globally) bounded vertex stabilisers. Furthermore, as  ${\sf Aut}(\ku T_\infty)$ acts transitively on the set of edges, it follows that the action is cofinite and hence that, for any vertex $v$, the orbital map
$$
g\mapsto gv
$$
is a quasi-isometry between  ${\sf Aut}(\ku T_\infty)$ and the tree $\ku T_\infty$ viewed as a metric space. 
\end{exa}


\section{Functorial amalgamation in Fra\"iss\'e classes}
If ${\ku A}=\langle A, \{\phi_i\}_{i\in I}, \{R_j\}_{j\in J}\rangle$ is some first-order structure, a {\em substructure}  of $\ku A$ is simply that which is given by some subset $B\subseteq A$ closed under the functions $\phi_i$.  Conversely, in this case $\ku A$ is a {\em superstructure} of $\ku B$. Similarly, a {\em reduct} of the structure $\ku A$ is any structure of the form
$$
{\ku B}=\langle A, \{\phi_i\}_{i\in I'}, \{R_j\}_{j\in J'}\rangle,
$$
where $I'\subseteq I$ and $J'\subseteq J$. In this case, $\ku A$ is said to be an {\em expansion} of $\ku B$. Thus, substructures of a structure $\ku A$ are obtained by diminishing the universe $A$ but keeping the same functions and relations, whereas reducts are obtained by diminishing the classes of functions and relations in the structure but keeping the same universe.

\begin{definition}
A countable first-order structure $\ku A$ is {\em ultrahomogenous} if, for any two finitely generated substructures $\ku B,\ku C\subseteq \ku A$ and any isomorphism
$$
\ku B\maps f \ku C,
$$
there is an automorphism $g\in {\sf Aut}(\ku A)$ that extends $f$. 
\end{definition}

If ${\ku A}=\langle A, \{\phi_i\}_{i\in I}, \{R_j\}_{j\in J}\rangle$ is a countable first-order structure, it is quite easy to expand $\ku A$ to an ultrahomogeneous countable first-order structure 
$$
{\ku B}=\langle A, \{\phi_i\}_{i\in I}, \{R_j\}_{j\in J'}\rangle,
$$
i.e., with $J\subseteq J'$, so that every automorphism $g$ of $\ku A$ is also an automorphism of $\ku B$, that is so that
$$
(a_1,\ldots, a_{k_j})\in R_j\equi \big(g(a_1),\ldots,g(a_{k_j})\big)\in R_j
$$
for all $j\in J'\setminus J$ and all tuples $(a_1,\ldots, a_{k_j})$ in $A$. To do this, one simply expands the sequence $\{R_j\}_{j\in J}$ with the relations given by the orbital types $\ku O(\ov a)\subseteq A^n$ for all tuples $\ov a=(a_1,\ldots, a_{n})$ in $A$. Because, as noted before, every non-Archimedean Polish group $G$ is isomorphic to the automorphism group ${\sf Aut}(\ku A)$ of some countable structure, this argument shows that one can furthermore assume this structure $\ku A$ to be ultrahomogeneous. 

Although the above construction is completely explicit, it still requires one to be able to compute the orbital types $\ku O(\ov a)$ of tuples $\ov a$ in $\ku A$. Nevertheless, ultrahomogeneity is quite frequent among first-order structures and we shall restrict ourselves to this setting.

A general approach to the study of countable ultrahomogeneous structures is what is now termed {\em Fra\"iss\'e classes}. Such classes permits one to represent ultrahomogeneous structures as limits of finitely generated structures in a precise sense.
To explain this, let us define the  {\em signature} $L$ of the first-order structure
$$
{\ku A}=\langle A, \{\phi_i\}_{i\in I}, \{R_j\}_{j\in J}\rangle
$$
to be the two indexed sequences $\{k_i\}_{i\in I}$ and  $\{k_j\}_{j\in J}$. It thus makes sense to talk of {\em isomorphisms} of two structures of the same signature $L$, namely, as bijections of the underlying sets that conjugate the functions of the same index $i$ and preserve relations of the same index $j$. An {\em embedding} of a structure $\ku A$ into a structure $\ku B$ of the same signature is defined to be an isomorphism between $\ku A$ and a substructure of $\ku B$.

\begin{definition}
A \index{Fra\"iss\'e class}{\em Fra\"iss\'e class} is a class $\mathfrak K$ of finitely generated structures of the same signature so that 
\begin{enumerate}
\item $\mathfrak K$ contains only countably many isomorphism types,
\item (hereditary property) if ${\ku A}\in \mathfrak K$ and $\ku B$ is a finitely generated structure embeddable into ${\ku A}$, then ${\ku B}\in \mathfrak K$,
\item (joint embedding property) for all ${\ku A}, {\ku B}\in \mathfrak K$, there some ${\ku C}\in \mathfrak K$ into which both ${\ku A}$ and ${\ku B}$ embed, 
\item (amalgamation property) if ${\ku A},{\ku B}_1, {\ku B}_2\in \mathfrak K$ and ${\ku A}\maps{\eta_i}{\ku B}_i$ are embeddings, then there is some ${\ku C}\in \mathfrak K$ and embeddings ${\ku B}_i\maps{\zeta_i}   {\ku C}$ so that $\zeta_1\circ \eta_1=\zeta_2\circ\eta_2$.
\end{enumerate}
\end{definition}

\begin{figure}
\vspace{-.5cm}
\begin{tikzcd}
&&&{\ku C} &                 &     &&               &{\ku C} &\\
&&{\ku B}_1 \arrow[dashed]{ur}{}  & &{\ku B}_2\arrow[dashed]{ul}{} &&&{\ku B}_1 \arrow[dashed]{ur}{\zeta_1}  & &{\ku B}_2\arrow[dashed,swap]{ul}{\zeta_2}\\
&&&&&&&&{\ku A}\arrow[]{ul}{\eta_1}\arrow[swap]{ur}{\eta_2}&
\end{tikzcd}
\caption{Commutative diagrams for joint embedding and amalgamation properties}
\label{fraisse}
\end{figure}

Observe first that, if ${\ku K}$ is any structure, then the class
$$
{\sf Age}(\ku K)=\big \{ \ku A\del \ku A \text{ is a finitely generated structure embeddable in }\ku K\big\}
$$ 
satisfies the hereditary and joint embedding properties. Moreover, if $\ku K$ is ultrahomogeneous, then ${\sf Age}(\ku K)$ also satisfies the amalgamation property and, finally,  if furthermore $\ku K$ has countable signature, then ${\sf Age}(\ku K)$ only contains countably many isomorphism types and is thus a Fraïssé class.

Conversely, the fundamental theorem of R. Fra\"iss\'e \cite[Theorem 6.1.2]{hodges} states that, for every Fra\"iss\'e class $\mathfrak K$ in a countable signature, there is a countable ultrahomogeneous structure $\ku K$, called the {\em Fra\"iss\'e limit} of $\mathfrak K$, so that ${\rm Age}(\ku K)=\mathfrak K$. Moreover, this limit $\mathfrak K$ is unique up to isomorphism.

We will now proceed to formulate a simple and common criterion on Fra\"iss\'e classes that implies that the resulting limit admits an orbital independence relation. This criterion has to do with when the amalgamation in the Fra\"iss\'e class can be performed in an appropriately canonical fashion. One rendering of this is given by K. Tent and M. Ziegler \cite[Example 2.2]{tent}. However, for our purposes, their notion is too weak and we instead need to impose a condition of functoriality.

\begin{definition}\label{funct amal}
Suppose $\mathfrak K$ is a Fra\"iss\'e class and that ${\ku A} \in \mathfrak K $. We say that $\mathfrak K $ admits a {\em functorial amalgamation}\footnote{Again this generalises the definition of functorial amalgamation from \cite{coarsebook}.} over $\ku A$ if there is map $\Theta$ that to all pairs of embeddings ${\ku A}\maps{\eta_i} {\ku B}_i$ with ${\ku B}_i\in\mathfrak K $,  associates a pair of embeddings 
$$
\Theta\big({\ku A}\maps{\eta_1} {\ku B}_1, {\ku A}\maps{\eta_2} {\ku B}_2\big)=\big({{\ku B}_1}\maps{\zeta_1} {\ku C}, {{\ku B}_2}\maps{\zeta_2} {\ku C}\big)
$$
into another structure ${\ku C}\in \mathfrak K $ so that 
$\zeta_1\circ\eta_1=\zeta_2\circ\eta_2$. Furthermore, we demand that, if 
$$
\Theta\big({\ku A}\maps{\eta'_1} {\ku B}'_1, {\ku A}\maps{\eta_2} {\ku B}_2\big)
=\big({{\ku B}'_1}\maps{\zeta'_1} {\ku C}', {{\ku B}_2}\maps{\zeta'_2} {\ku C}'\big)
$$
for some other embedding ${\ku A}\maps{\eta'_1} {\ku B}'_1$ into ${\ku B}_1'\in \mathfrak K$ and if ${\ku B}_1\maps{\iota}  {\ku B}_1'$ is an embedding with $\iota\circ \eta_1=\eta_1'$, then there is an embedding ${\ku C}\maps\sigma  {\ku C}'$ so that $\sigma\circ \zeta_1=\zeta_1'\circ\iota$. 
\end{definition}

\begin{figure}
\vspace{-.5cm}
\begin{tikzcd}
&&&&&&{\ku C}'  &&\\
&&&&&&{\ku C} \arrow[dashed]{u}{\sigma} &&\\
&&&&{\ku B}'_1\arrow[]{uurr}{\zeta_1'}&{\ku B}_1 \arrow[swap]{l}{\iota_1}\arrow[swap]{ur}{\zeta_1}  & &{\ku B}_2\arrow[]{ul}{\zeta_2}\arrow[swap]{uul}{\zeta'_2}&\\
&&&&&&{\ku A}\arrow[swap]{ul}{\eta_1}\arrow[]{ur}{\eta_2}\arrow[]{ull}{\eta'_1}&&
\end{tikzcd}
\caption{Commutative diagram for functorial amalgamation}
\label{functorial amalgamation scheme}
\end{figure}

With the concept of functorial amalgamation in hand, we can now use this to define a corresponding notion of orbital independence. For this, if $\ov a$ is a finite tuple in a structure $\ku K$, let  
$$
\ku A=\langle\ov a\rangle
$$
denote the substructure of $\ku K$ generated by $\ov a$.

\begin{definition}\label{functorial indep}
Suppose $\mathfrak K$ is a Fra\"iss\'e class with limit $\ku K$,  $\ov a$ is a finite tuple in $\ku K$ and $\Theta$ is a functorial amalgamation on $\mathfrak K$ over ${\ku A}=\langle\ov a\rangle$. For finite tuples $\ov b_1, \ov b_2$ in $\ku K$, structures ${\ku B}_i=\langle \ov a,\ov b_i\rangle$ and  ${\ku D}=\langle \ov a,\ov b_1,\ov b_2\rangle$, we set 
$$
\ov b_1\forkindep[\Theta] \ov b_2
$$
if and only if 
$$
\Theta\big({\ku A}\maps{{\sf id}_\ku A} {\ku B}_1, {\ku A}\maps{{\sf id}_\ku A} {\ku B}_2\big)
=\big({{\ku B}_1}\maps{\pi\circ {\sf id}_{{\ku B}_1}} {\ku C}, {{\ku B}_2}\maps{\pi\circ {\sf id}_{{\ku B}_2}} {\ku C}\big).
$$
for  some structure $\ku C\in \mathfrak K$ and some embedding ${\ku D}\maps\pi  {\ku C}$.
\end{definition}

Alternatively, if we let 
$$
\Theta\big({\ku A}\maps{{\sf id}_\ku A} {\ku B}_1, {\ku A}\maps{{\sf id}_\ku A}{\ku B}_2\big)
=\big({{\ku B}_1}\maps{\zeta_1} {\ku C}, {{\ku B}_2}\maps{\zeta_2} {\ku C}\big)
$$
then the definition of $\forkindep[\Theta]$ can be expressed in terms of a commutative diagram.
\begin{figure}
\vspace{-.5cm}
\begin{tikzcd}
&&&&&&{\ku C} &            \\
&&&&&&{}&\\
&&&&&&\langle \ov a,\ov b_1, \ov b_2\rangle\arrow[dashed]{uu}{\pi}  &            \\
&&&&&\langle \ov a,\ov b_1\rangle \arrow[swap]{ur}{\sf id}\arrow[]{uuur}{\zeta_1}   & &\langle \ov a,\ov b_2\rangle\arrow[]{ul}{\sf id}\arrow[swap]{uuul}{\zeta_2} \\
&&&&&&\langle \ov a\rangle\arrow[]{ul}{\sf id}\arrow[swap]{ur}{\sf id}&
\end{tikzcd}
\caption{Commutative diagram for orbital independence}
\label{}
\end{figure}

It now remains to check that this is indeed an orbital independence relation.

\begin{theorem}\label{functorial amalgamation OB}\cite{coarsebook}
Suppose $\mathfrak K$ is a Fra\"iss\'e class with limit $\ku K$, $\ov a$ is a finite tuple in $\ku K$ and $\Theta$ is a functorial amalgamation on $\mathfrak K$ over $\langle\ov a\rangle$.  Then $\forkindep[\Theta]$  is an orbital $\ov a$-independence relation on $\mathfrak K$. 
\end{theorem}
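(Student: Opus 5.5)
We verify the three axioms of an orbital $\ov a$-independence relation in turn, using only the ultrahomogeneity of $\ku K$, the universality of the Fra\"iss\'e limit, and the functoriality clause of Definition \ref{funct amal}.

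\emph{Monotonicity} is immediate from functoriality applied with $\ku B_1'=\langle\ov a,\ov c,\ov d\rangle$. Suppose $(\ov c,\ov d)\forkindep[\Theta]\ov b$, witnessed by $\pi\colon\langle\ov a,\ov c,\ov d,\ov b\rangle\to\ku C'$. Take $\ku B_1=\langle\ov a,\ov c\rangle$ with inclusion $\iota$ into $\ku B_1'$, and let $\Theta({\sf id},{\sf id})=(\zeta_1,\zeta_2)$ into $\ku C$. Functoriality gives $\sigma\colon\ku C\to\ku C'$ with $\sigma\zeta_1=\zeta'_1\iota=\pi|_{\ku B_1}$. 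I also need $\sigma\zeta_2=\zeta_2'=\pi|_{\ku B_2}$; the definition as stated only asserts $\sigma\circ\zeta_1=\zeta_1'\circ\iota$, so I will read the diagram in Figure \ref{functorial amalgamation scheme} as also requiring $\sigma\circ\zeta_2=\zeta'_2$, as the figure indicates. Granting this, $\sigma\zeta_1$ and $\sigma\zeta_2$ agree with $\pi$ on $\langle\ov a,\ov c\rangle$ and on $\langle\ov a,\ov b\rangle$. Since $\sigma$ is injective, $\zeta_1,\zeta_2$ factor through the substructure $\langle\ov a,\ov c,\ov b\rangle$ via $\sigma^{-1}\circ\pi$, which is an embedding into $\ku C$. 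Thus $\ov c\forkindep[\Theta]\ov b$, and the case of $\ov d$ is symmetric.

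\emph{Existence.} Given $\ov b,\ov c$, let $\Theta$ on the inclusions of $\langle\ov a\rangle$ into $\langle\ov a,\ov c\rangle$ and $\langle\ov a,\ov b\rangle$ return $(\zeta_1,\zeta_2)$ into $\ku C\in\mathfrak K$. By the extension property of the Fra\"iss\'e limit, which follows from ultrahomogeneity and ${\sf Age}(\ku K)=\mathfrak K$, there is an embedding $\theta\colon\ku C\to\ku K$ with $\theta\zeta_2={\sf id}$ on $\langle\ov a,\ov b\rangle$. Then $\theta\zeta_1$ is an embedding of $\langle\ov a,\ov c\rangle$ into $\ku K$ fixing $\ov a$, and by ultrahomogeneity it extends to some $g\in W_{\ov a}$. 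Set $\ku D=\langle\ov a,g\ov c,\ov b\rangle\subseteq\theta[\ku C]$ and $\pi=\theta^{-1}|_{\ku D}$. Then $\pi\circ g|_{\langle\ov a,\ov c\rangle}=\zeta_1$, so we need to identify $\langle\ov a,\ov c\rangle$ with $\langle\ov a,g\ov c\rangle$ via $g$. This mismatch is the subtle point: $\Theta$ is applied to literally different structures. I will handle it with functoriality applied to the isomorphism $\iota=g|_{\langle\ov a,\ov c\rangle}$ and its inverse, which shows that $\Theta$ commutes with isomorphisms up to a compatible isomorphism of the amalgams. That gives $g\ov c\forkindep[\Theta]\ov b$.

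\emph{Finite stationarity.} Suppose $\ku O(\ov c/\ov a)=\ku O(\ov b/\ov a)$ and $\ov c\forkindep[\Theta]\ov b$. Then $\langle\ov a,\ov c\rangle\cong\langle\ov a,\ov b\rangle$ over $\ov a$, and, by the isomorphism-invariance just established, the isomorphism type of $\langle\ov a,\ov c,\ov b\rangle$ with its generators is determined by $\Theta(\eta,{\sf id})$, where $\eta$ is one fixed embedding. So the type is one fixed pointed structure inside a fixed $\ku C$. Because $\pi$ lands in $\ku C$ with $\pi|_{\ku B_2}=\zeta_2$ and $\pi(\ov c)=\zeta_1(\text{copy of }\ov b)$, the tuple $(\ov a,\ov b,\ov c)$ has a quantifier-free type in $\ku K$ that is uniquely determined. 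By ultrahomogeneity, any two such $\ov c$ then satisfy $\ku O(\ov c/\ov b)=\ku O(\ov c'/\ov b)$; the set in question is in fact a singleton, hence finite. I expect the main obstacle to be the bookkeeping of isomorphism-invariance of $\Theta$ (and the $\zeta_2$-compatibility), which I plan to extract from functoriality applied to isomorphisms in both directions.
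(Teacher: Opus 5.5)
Your proposal is correct and follows essentially the paper's argument: monotonicity by a diagram chase through functoriality, existence by realising the amalgam inside $\ku K$ and using ultrahomogeneity, and stationarity by applying functoriality to an isomorphism over $\ov a$ and extending the resulting isomorphism of $\langle\ov a,\ov b,\ov c\rangle$ to an automorphism. Comparing each $\ov c$ with a fixed reference amalgam, rather than comparing $\ov c$ with $\ov d$ directly, makes no real difference, and your reading $\sigma\circ\zeta_2=\zeta_2'$ is exactly what the paper itself uses implicitly in its stationarity step.

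One small correction: functoriality gives only an embedding $\sigma$ between the amalgams, not an isomorphism, so your phrase ``up to a compatible isomorphism of the amalgams'' overstates what you get. This costs nothing, because a single application of functoriality with $\iota$, with no inverse needed, already suffices in both your existence and stationarity steps.
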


\begin{proof}
Monotonicity of $\forkindep[\Theta]$ follows from  functoriality of $\Theta$ and is easiest checked by a little diagram chasing. Also, the existence condition on $\forkindep[\Theta]$ follows from the ultrahomogeneity of $\ku K$ and the realisation of the amalgam $\Theta$ inside of $\ku K$. 

For finite stationarity, suppose that $\ov b$, $\ov c$ and $\ov d$  are finite tuples so that  $\ov c\forkindep[\Theta] \ov b$, $\ov d\forkindep[\Theta] \ov b$ and 
$$
\OO(\ov c/\ov a)=\OO(\ov d/\ov a).
$$
It then suffices to show that
$$
\OO(\ov c/\ov b)=\OO(\ov d/\ov b).
$$

Note that because  $\ov c\forkindep[\Theta] \ov b$ and  $\ov d\forkindep[\Theta] \ov b$, there are structures $\ku C,\ku D\in \mathfrak K$ and embeddings
$$
\langle \ov a,\ov b, \ov c\rangle\maps{\pi_{\ku C}}  {\ku C}, \quad \langle \ov a,\ov b, \ov d\rangle\maps{\pi_{\ku D}}  {\ku D},
$$
so that
$$
\Theta\big(
\langle \ov a\rangle\maps{{\sf id}}\langle \ov a,\ov b\rangle,
\langle \ov a\rangle\maps{{\sf id}}\langle \ov a, \ov c\rangle
\big)
=\big(
\langle \ov a,\ov b\rangle\maps{{\pi}_{\ku C}} {\ku C},
\langle \ov a,\ov c\rangle\maps{{\pi}_{\ku C}} {\ku C}\big)
$$
and
$$
\Theta\big(
\langle \ov a\rangle\maps{{\sf id}}\langle \ov a,\ov b\rangle,
\langle \ov a\rangle\maps{{\sf id}}\langle \ov a, \ov d\rangle
\big)
=\big(
\langle \ov a,\ov b\rangle\maps{{\pi}_{\ku D}} {\ku D},
\langle \ov a,\ov d\rangle\maps{{\pi}_{\ku D}} {\ku D}\big).
$$
Also, because $\OO(\ov c/\ov a)=\OO(\ov d/\ov a)$, there is an isomorphism $\langle \ov a,\ov c\rangle\maps\iota\langle \ov a,\ov d\rangle$ so that $\iota(\ov a)=\ov a$. Thus, as $\Theta$ is a functorial amalgamation over $\langle \ov a\rangle$, it follows that there is an embedding $\ku C\maps \sigma\ku D$ so that
$$
\sigma\circ {\pi}_{\ku C}(x)={\pi}_{\ku D}\circ\iota(x)
$$
for all $x\in \langle \ov a,\ov c\rangle$ and 
$$
\sigma\circ {\pi}_{\ku C}(x)={\pi}_{\ku D}(x)
$$
for all $x\in \langle \ov a,\ov b\rangle$. In particular, 
$$
\sigma{\pi}_{\ku C}(\ov a,\ov b,\ov c)={\pi}_{\ku D}(\ov a,\ov b, \ov d)
$$
and so 
$$
\sigma{\pi}_{\ku C}\big[\langle \ov a,\ov b,\ov c\rangle\big]={\pi}_{\ku D}\big[\langle \ov a,\ov b,\ov d\rangle\big].
$$
Thus, if we let $\rho$ be the inverse to the isomorphism
$$
\langle \ov a,\ov b,\ov d\rangle\maps{{\pi}_{\ku D}} {\pi}_{\ku D}\big[\langle \ov a,\ov b,\ov d\rangle\big]
$$
then $\rho\sigma{\pi}_{\ku C}$ is an isomorphism between $\langle \ov a,\ov b,\ov c\rangle$ and $\langle \ov a,\ov b,\ov d\rangle$
that fixes $\ov a$ and $\ov b$, while mapping $\ov c$ to $\ov d$. By  ultrahomogeneity of $\ku K$, this implies that there is an automorphism $g\in {\rm Aut}(\ku K)$ extending $\rho\sigma{\pi}_{\ku C}$, hence showing that
$$
\OO(\ov c/\ov b)=\OO(\ov d/\ov b)
$$
and thus verifying finite stationarity.
\end{proof}

\begin{figure}
\vspace{-.5cm}
\begin{tikzcd}
&&&{\ku C}    \arrow[dashed]{rr}{\sigma} &      &{\ku D}\arrow[dashed,bend left=40]{dd}{\rho}  &      \\
&&&&{}&&\\
&&&\langle \ov a,\ov b, \ov c\rangle  \arrow[dashed]{uu}{\pi_{\ku C}}  &&  \langle \ov a,\ov b, \ov d\rangle  \arrow[dashed]{uu}{\pi_{\ku D}}&   \\
&&&&{}&&\\
&&\langle \ov a,\ov c\rangle \arrow[dashed, bend right=15]{rrrr}{{\qquad\qquad}\iota} \arrow[]{uur}{{\sf id}}  & &\langle \ov a,\ov b\rangle\arrow[swap]{uul}{{\sf id}}\arrow[]{uur}{{\sf id}}    &&\langle \ov a,\ov d\rangle\arrow[swap]{uul}{\sf id} \\
&&&&{}&&\\
&&&&  \langle \ov a\rangle\arrow[]{uull}{\sf id}\arrow[]{uu}{\sf id}\arrow[swap]{uurr}{\sf id}&&
\end{tikzcd}
\caption{Commutative diagram for stationarity in Theorem \ref{functorial amalgamation OB}}
\end{figure}

\begin{corollary}
Suppose $\mathfrak K$ is a Fra\"iss\'e class that admits a functorial amalgamation over some structure $\ku A\in \mathfrak K$. Then the automorphism group ${\sf Aut}(\ku K)$ of the Fra\"iss\'e limit $\ku K$  is locally bounded and hence admits a coarsely proper metric.
\end{corollary}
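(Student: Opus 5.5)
The plan is to combine Theorem \ref{functorial amalgamation OB} with the theorem on orbital independence relations, and then apply Theorem \ref{locally (OB)}.

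First we locate $\ku A$ inside the limit. Since ${\rm Age}(\ku K)=\mathfrak K$ and $\ku A\in\mathfrak K$ is finitely generated, there is an embedding of $\ku A$ into $\ku K$. Its image is a substructure $\langle \ov a\rangle$ for some finite tuple $\ov a$ in $\ku K$, namely a finite generating tuple of the image. We transport $\Theta$ along the isomorphism $\ku A\iso\langle\ov a\rangle$. Concretely, for embeddings of $\langle\ov a\rangle$ we precompose with this isomorphism, apply $\Theta$, and take the resulting embeddings unchanged. Functoriality is preserved, because the condition in Definition \ref{funct amal} only involves commuting diagrams over the base, and these are unaffected by relabelling the base through a fixed isomorphism. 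This gives a functorial amalgamation on $\mathfrak K$ over $\langle \ov a\rangle$. This transport is the only point needing care, and it is routine.

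Next, Theorem \ref{functorial amalgamation OB} says that $\forkindep[\Theta]$ is an orbital $\ov a$-independence relation on $\ku K$. The theorem on orbital independence relations then shows that the pointwise stabiliser $W_{\ov a}$ is globally bounded. It is therefore bounded as a subset of ${\sf Aut}(\ku K)$. Indeed, every continuous left-invariant pseudometric on ${\sf Aut}(\ku K)$ restricts to a continuous left-invariant pseudometric on $W_{\ov a}$, and that restriction is bounded by global boundedness.

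Since $W_{\ov a}$ is an open subgroup, it is a bounded identity neighbourhood, so ${\sf Aut}(\ku K)$ is locally bounded. The group ${\sf Aut}(\ku K)$ is Polish, being the automorphism group of a countable structure and hence a closed subgroup of $S_\infty$. Here I assume the signature is countable, as is implicit in the Fra\"iss\'e setting, so that the limit $\ku K$ exists. Theorem \ref{locally (OB)}, implication (3)$\saa$(4), now gives a compatible left-invariant metric $d$ with $\ku E_L=\ku E_d$, that is, a coarsely proper metric.
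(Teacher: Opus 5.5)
Your proposal is correct and follows the route the paper intends. Theorem \ref{functorial amalgamation OB} gives an orbital $\ov a$-independence relation, so the open subgroup $W_{\ov a}$ is globally bounded and hence bounded in the Polish group ${\sf Aut}(\ku K)$, which makes it locally bounded, and Theorem \ref{locally (OB)} then supplies the coarsely proper metric. Your transport of $\Theta$ to the copy $\langle\ov a\rangle$ of $\ku A$ inside $\ku K$ is a detail the paper leaves implicit, and you handle it correctly.
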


\begin{exa}[Fraïssé limits of metric spaces]
To give an easily understood example of a Fra\"iss\'e class that admits a functorial amalgamation consider the class $\mathfrak K$ consisting of all non-empty finite metric spaces where the metric only takes values in $\N$. To see how to amalgamate such spaces suppose $\ku A,\ku B_1,\ku B_2\in \mathfrak K$ and that $\ku A\maps{\eta_1}\ku B_1$ and $\ku A\maps{\eta_2}\ku B_2$ are isometric embeddings. We then consider the pseudometric space obtained by taking the disjoint union of $\ku B_1$ and $\ku B_2$ with pseudometric
$$
d(x,y)=\begin{cases}
d(x,y) & \text{ if }x,y\in \ku B_1\\
d(x,y) & \text{ if }x,y\in \ku B_2\\
\min_{z\in \ku A}d\big(x,\eta_1(z)\big)+d\big(\eta_2(z),y\big) & \text{ if }x\in \ku B_1\;\&\; y\in \ku B_2
\end{cases}
$$
Let also $\ku B_1\oplus_{\ku A}\ku B_2$ be the metric quotient of this space and define isometric embeddings $\ku B_i\maps{\zeta_i}\ku B_1\oplus_{\ku A}\ku B_2$ by $\zeta_i(x)=x$. It is straightforward to then verify that $\mathfrak K$ is a Fra\"iss\'e class and that the above is a functorial amalgamation scheme over the one-point metric space $\ku A=\{*\}$. 

The resulting Fra\"iss\'e limit $\ku K$ is commonly known as the {\em integral Urysohn metric space} and is an ultrahomogeneous countable metric space with distance set $\N$ that contains isometric copies of all finite metric spaces over $\N$. Furthermore, the automorphism group, which in this case is the isometry group ${\sf Isom}(\ku K)$, is locally bounded. In fact, for every point $x\in \ku K$, the pointwise stabiliser $W_x$ is globally bounded.

We define a graph relation $E$ on $\ku K$ by setting
$$
E=\{(x,y))\del  d(x,y)=1\}.
$$
Now, suppose $x$ and $y$ are two points in $\ku K$ and let $n=d(x,y)$. Since $\ku K$ contains all finite metric spaces with distances in  $\N$, we can find points $z_0, \ldots, z_n$  in $\ku K$ so that $d(z_0,z_n)=n$ and  $d(z_i,z_{i+1})=1$ for all $i$. By ultrahomogeneity of $\ku K$, we can then find some isometry $g\in {\sf Isom}(\ku K)$ so that $g(z_0)=x$ and $g(z_n)=y$. In particular, $d\big(g(z_i),g(z_{i+1})\big)=1$ for all $i$ and so 
$$
g(z_0), g(z_1),\ldots, g(z_n)
$$
is a vertex path in the graph $(\ku K,E)$ from $x$ to $y$. This shows that the shortest path distance $\rho(x,y)$ between any two vertices in the graph $(\ku K,E)$ equals the distance $d(x,y)$ in the metric space $\ku K$. Furthermore, $(\ku K,E)$ is  connected and ${\sf Isom}(\ku K)$ acts transitively on the set of edges $E$  and with globally bounded vertex stabilisers. 

Assembling all the information above and applying Theorem \ref{action}, we thus find that the evaluation map
$$
g\in {\sf Isom}(\ku K)\mapsto g(x)\in \ku K
$$
is a quasi-isometry between the topological group ${\sf Isom}(\ku K)$  and the metric space $\ku K$. Further details on this specific construction can by found in  \cite[Examples 6.32, 6.36]{coarsebook}.
\end{exa}


\section{Further directions for research}
The results presented here of course only provides the basic framework for the analysis of concrete examples of topological groups. Although perhaps intellectually pleasing in itself, the real value of the theory lies in imparting geometric structure to common topological groups which may then reveal further details about their structure. An important aspect of this would be to establish direct links between the algebraic, dynamical and geometric structure of topological groups. For example, the geometric structure of some mathematical object may provide information about the geometric structure of its symmetry group, which in turn may have consequences for the algebraic structure of the same. 

\begin{problem}
How are the algebraic, dynamical and geometric properties of topological groups correlated? 
\end{problem}

One area in which the coarse geometric structure seems to be particularly interesting is big mapping class groups, that is, uncountable mapping class groups. In this setting, work of K. Mann and K. Rafi \cite{rafi} completely determines for which surfaces the mapping class group is locally bounded and, under additional assumptions, when it is monogenic. In the latter case, it however remains a significant task to provide an intelligible description of the quasi-isometric type of the mapping class group, though some progress has been made \cite{cohen}.

\begin{problem}
Provide explicit descriptions of the quasi-isometry types of big mapping class groups and determine the non-trivial geometric properties that they may have? 
\end{problem}

By a non-trivial geometric property, one might mean any property of quasimetric spaces that is inherited by subspaces but fails for  universal separable metric spaces such as $C([0,1])$.

In the light of the isomorphism ${\sf MCG}_+(M) \iso {\sf Aut}(C(M))$, Theorem \ref{action}  and the proof of Theorem \ref{existence}, we know that when ${\sf MCG}_+(M)$ is monogenic, one can find an invariant subset $V\subseteq\big( {\sf Vert}\; C(M)\big)^n$ for some $n$ and an invariant connected edge relation $E\subseteq V\times V$ so that, for every $v\in V$, the evaluation map
$$
g\in {\sf MCG}_+(M) \mapsto gv\in V
$$
is a quasi-isometry. The paper \cite{cohen} investigates when, in fact, one can take $n=1$.

\begin{problem}
Describe the quasi-isometry type of big mapping class groups in terms of invariant graph structures on tuples of isotopy classes of essential simple closed curves on $M$.
\end{problem}

On the pure side of the coarse geometry theory of Polish groups, numerous interesting and fundamental questions remain open. A large number of these are collected at the end of \cite{coarsebook}. Alternatively, regarding local Lipschitz structure, the main problem is to characterise the Polish groups admitting a minimal metric.

\begin{problem}
Suppose $G$ is a Polish group with a minimal metric. Is $G$ isomorphic to a closed subgroup of a Banach--Lie group?
\end{problem}

This problem is related to Hilbert's fifth problem in infinite dimensions and would provide a significant clarification of how minimal metrics can come about.


\end{document}